\documentclass[11pt]{article}

\usepackage{amssymb}
\usepackage{amsmath}
\usepackage{mathrsfs}
\usepackage{amsfonts}
\usepackage{vmargin}
\usepackage{amsthm}
\usepackage{graphicx}
\usepackage{authblk}
\usepackage{xfrac}
\usepackage[T1]{fontenc}

\long\def\symbolfootnote[#1]#2{\begingroup%
\def\thefootnote{\fnsymbol{footnote}}\footnote[#1]{#2}\endgroup}



\setmarginsrb{20mm}{20mm}{20mm}{20mm}{10mm}{10mm}{10mm}{10mm}

\newtheoremstyle{remark}
  {}{}{}{}{\bfseries}{.}{.5em}{{\thmname{#1 }}{\thmnumber{#2}}{\thmnote{ (#3)}}}

\RequirePackage{amsthm}

 \frenchspacing
\theoremstyle{definition}
\newtheorem{defi}{Definition}[section]
\newtheorem{rem}[defi]{Remark}
\theoremstyle{plain}

\newtheorem{tw}[defi]{Theorem}
\newtheorem{lem}[defi]{Lemma}
\newtheorem{cor}[defi]{Corollary}
\newtheorem{prop}[defi]{Proposition}

\def\vint{\mathop{\mathchoice%
          {\setbox0\hbox{$\displaystyle\intop$}\kern 0.22\wd0%
           \vcenter{\hrule width 0.6\wd0}\kern -0.82\wd0}%
          {\setbox0\hbox{$\textstyle\intop$}\kern 0.2\wd0%
           \vcenter{\hrule width 0.6\wd0}\kern -0.8\wd0}%
          {\setbox0\hbox{$\scriptstyle\intop$}\kern 0.2\wd0%
           \vcenter{\hrule width 0.6\wd0}\kern -0.8\wd0}%
          {\setbox0\hbox{$\scriptscriptstyle\intop$}\kern 0.2\wd0%
           \vcenter{\hrule width 0.6\wd0}\kern -0.8\wd0}}%
          \mathopen{}\int}
\def\={\hspace{-3mm}&=&\hspace{-3mm}}

\renewcommand{\r}{\mathbb{R}}
\newcommand{\n}{\mathbb{N}}

\newcommand{\ddt}{\frac{{d}}{dt}}
\newcommand{\iab}{\int_a^b}
\let \epsilon \varepsilon
\let \phi \varphi
\newcommand{\lab}[1]{L^{#1}(a,b)}
\newcommand{\hab}{H^1(a,b)}
\newcommand{\habl}{H^2(a,b)}
\newcommand{\habo}{H^1_0(a,b)}

\newcommand{\ul}{\sfrac{1}{2}}
\newcommand{\loc}{\operatorname*{loc}}
\newcommand{\T}{\mathcal{T}}
\newcommand{\M}{\mathcal{M}}

\hyphenation{phy-si-cal}

\begin{document}
\date{\today}
\title{\bf Time-asymptotics of a heated string}

\author[1]{Piotr Micha{\l} Bies\footnote{piotr.bies@pw.edu.pl}} \author[2,1]{Tomasz Cie\'slak\footnote{cieslak@impan.pl}}
\affil[1]{Faculty of Mathematics and Information Sciences, Warsaw University of Technology, Ul. Koszykowa 75, 00-662 Warsaw, Poland}
\affil[2]{Institute of Mathematics, Polish Academy of Sciences, 00-656 Warsaw, Poland}
\maketitle

\begin{abstract}
In the present paper, we study a model of a thermoelastic string that is initially heated. We classify all the possible asymptotic states
when time tends to infinity of such a model. Actually, we show that whatever the initial data is, a heated string must converge to a flat, steady 
string with uniformly distributed heat. The latter distribution is calculated from the energy conservation. In order to obtain the result, we need to 
take a few steps. In the first two steps, time-independent bounds from above and from below (by a positive constant) of the temperature are obtained. 
This is done via the Moser-like iteration. The lower bound is obtained via the Moser iteration on the negative part of the logarithm of temperature. In 
the third step, we obtain a time-independent higher-order estimate, which yields compactness of a sequence of the values of the solution when time 
tends to infinity. Here, an estimate involving the Fisher information of temperature, together with a recent functional inequality from \cite{CFHS} and 
an $L^2(L^2)$ estimate of the gradient of entropy, enable us to arrive at a tricky Gr\"{o}nwall type inequality. Finally, in the last steps, we define 
the dynamical system on a proper functional phase space and study its $\omega$-limit set. To this end, we use, in particular, the quantitative version 
of the second principle of thermodynamics. Also, the entropy dissipation term and the bound of the entropy from below are useful when identifying the 
structure of the $\omega$-limit set.
\end{abstract}

\bigskip

\noindent
{\bf Keywords}: 1D thermoelasticity, time-independent estimates, convergence to steady states, second principle of thermodynamics
\medskip

\noindent
\emph{Mathematics Subject Classification (2020): 35B35, 35B40, 35Q74}
\medskip

\section{Introduction}

This article is devoted to the study of the asymptotic behavior of a heated string. We consider a string with fixed ends.
The function $u:(0,T)\times (a,b)\rightarrow \r$ denotes the position of the string, and a positive function $\theta:(0,T)\times (a,b)\rightarrow \r$ 
denotes its temperature. The following system of equations is satisfied by a couple $(u,\theta)$.

\begin{equation}\label{eq}
\begin{cases}
u_{tt}-u_{xx}=\mu\theta_x,&\textrm{in }(0,\infty)\times (a,b),\\
\theta_t-\theta_{xx}=\mu\theta u_{tx},&\textrm{in }(0,\infty)\times (a,b),\\
u(\cdot,a)=u(\cdot,b)=\theta_x(\cdot,a)=\theta_x(\cdot,b)=0,&\textrm{on } (0,\infty),\\
u(0,\cdot)=u_0,\ u_t(0,\cdot)=v_0,\ \theta(0,\cdot)=\theta_0>0,
\end{cases}
\end{equation}
where $v_0$ is the initial velocity, $\mu$ is a material constant. The above system is a particular example of the $1d$ thermoelasticity model for the 
details of the physical derivation, we refer to \cite{TC2, slemrod}. Let us only mention here that \eqref{eq} corresponds to the choice of the 
Helmholtz free energy function
$\Psi(\theta,u_x):=\theta \log \theta -\theta +\mu\theta  u_x +\frac12  u_x^2$ in \cite{slemrod} (Fick's law for the spread of the 
temperature is also assumed).

The local-in-time existence and uniqueness of regular solutions to even more general $1d$ thermoelasticity systems have already been established in 
\cite{slemrod}. Similarly, the global existence of solutions starting in the neighborhood of the steady states. Further results concerning 
local-in-time solutions or solutions in the neighborhood of the steady states (including refinements like proving the positivity of the temperature) 
have been established in \cite{TC1,hrusa_tarabek, racke,racke_shibata, rsj}. The convergence of the solutions of the thermoviscoelastic system to the 
solutions to \eqref{eq}, when viscosity goes to $0$, has been established by \cite{daf_chen}. Finally, the weak solutions in the higher-dimensional 
case
(together with weak-strong uniqueness) are the purpose of \cite{TC2,galanopoulou}. The existence (and uniqueness) results for the general 
thermoelasticity problems and the state of the art in the linear approximation of the latter are presented in the notes of Racke (\cite{Racke_not}).

Dafermos and Hsiao have shown finite-time blowup of solutions to some thermoelasticity $1d$ problem with a nonlinear term $(p(u)u_x)_x$ instead of 
$u_{xx}$ in the upper equation of \eqref{eq}, see \cite{daf_hsiao}. A similar refined result (in terms of physical relevance) has been obtained, using 
similar methods, in \cite{hrusa_m}. Recently, a global existence of unique solutions in the case of \eqref{eq} has been obtained in \cite{BC}. There, 
we utilized a Fisher information-based functional to reach the result. Having defined the unique global solutions, a natural question of asymptotic 
behavior occurs. On the one hand, it has been known that solutions starting next to the steady states converge towards them, see, for instance, 
\cite{rsj} or \cite{Racke_not} for a wider picture. In the linear approximation, a $1d$ case was also well understood, see, for instance, \cite{Henry}. 
Here, we study the asymptotic behavior for general initial data. We show that whatever the data is, weak solutions constructed in \cite{BC} (we shall 
recall the definition in the next section) converge with time to the particular steady state, namely a flat string with uniformly distributed 
temperature.

Indeed, the following theorem is the main result of the present article.
\begin{tw}\label{main_tw_gr}
Let us assume $u$ and $\theta$ are a weak solution of \eqref{eq} starting from initial data $u_0$ and $\theta_0>0$, respectively. Then, the following 
convergences
\begin{align*}
u(t,\cdot)&\to 0\textrm{ in }\habo \textrm{ when }t\to\infty,\\
u_t(t,\cdot)&\to 0\textrm{ in }\lab2 \textrm{ when }t\to\infty,\\
\theta(t,\cdot)&\to \theta_{\infty}\textrm{ in }\lab2 \textrm{ when }t\to\infty,
\end{align*}
hold, where $\theta_{\infty}:=\left(\frac12\iab v_0^2dx+\frac12\iab u_{0,x}^2dx+\iab\theta_0 dx\right)/(b-a)$.
\end{tw}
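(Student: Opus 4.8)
The plan is to treat \eqref{eq} as a dynamical system on a phase space of triples $(u,u_t,\theta)$ and to identify its $\omega$-limit set with the single equilibrium $(0,0,\theta_\infty)$; the three convergences then follow at once. The backbone is two conservation/monotonicity laws. First, testing the $u$-equation with $u_t$, the $\theta$-equation with $1$, and adding, the boundary conditions make the coupling terms cancel, so the total energy $E(t):=\ul\iab u_t^2\,dx+\ul\iab u_x^2\,dx+\iab\theta\,dx$ is conserved; this both fixes the constant $\theta_{\infty}=E(0)/(b-a)$ and bounds $\iab\theta\,dx$. Second, dividing the $\theta$-equation by $\theta$ and integrating, the Neumann condition kills the boundary term while $\mu\iab u_{tx}\,dx=0$, so that
\[
\ddt\iab\log\theta\,dx=\iab\frac{\theta_x^2}{\theta^2}\,dx=\iab|\partial_x\log\theta|^2\,dx\ge 0 .
\]
Thus the entropy $\Sigma(t):=\iab\log\theta\,dx$ is nondecreasing; together with the uniform upper bound on $\theta$ it is bounded above, hence convergent, and in particular the dissipation is integrable: $\int_0^\infty\iab|\partial_x\log\theta|^2\,dx\,dt<\infty$.

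Next I would invoke the time-independent estimates established earlier: the two-sided bound $0<c\le\theta(t,x)\le C$ coming from the Moser iterations, and the higher-order bound obtained from the Fisher-information functional, the inequality of \cite{CFHS}, and the $L^2(L^2)$ control of $\partial_x\log\theta$. These yield that the full trajectory $\{(u(t,\cdot),u_t(t,\cdot),\theta(t,\cdot)):t\ge0\}$ is precompact in the phase space $\habo\times\lab2\times\lab2$. Consequently the $\omega$-limit set $\omega$ is nonempty, compact and invariant under the semiflow, and since $\Sigma$ is monotone and convergent, it is constant on $\omega$ by the standard LaSalle argument.

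The crux is the rigidity on $\omega$. Constancy of $\Sigma$ along any complete trajectory lying in $\omega$ forces $\iab|\partial_x\log\theta|^2\,dx\equiv0$, so every such $\theta$ is spatially constant, $\theta=\bar\theta(t)$ with $\theta_x\equiv0$. Substituting into the $\theta$-equation gives $\bar\theta'(t)=\mu\bar\theta(t)\,u_{tx}(t,x)$; since the left side is independent of $x$, $u_{tx}$ is spatially constant, so $u_t$ is affine in $x$, and the clamped conditions $u_t(\cdot,a)=u_t(\cdot,b)=0$ force $u_t\equiv0$. Then $\bar\theta'\equiv0$, and the $u$-equation with $\theta_x=0$ and $u_{tt}=0$ gives $u_{xx}=0$ with $u(\cdot,a)=u(\cdot,b)=0$, whence $u\equiv0$. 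Thus every point of $\omega$ has the form $(0,0,\bar\theta)$ with $\bar\theta$ constant, and energy conservation pins $\bar\theta=\theta_{\infty}$. Hence $\omega=\{(0,0,\theta_{\infty})\}$ is a single point, which, by precompactness, upgrades to convergence of the whole trajectory and gives the three stated limits.

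The main obstacle I anticipate is the precompactness step: extracting a time-independent higher-order bound from a system with no direct mechanical damping, where the needed regularity must be squeezed out of the entropy dissipation through the Fisher-information estimate and the delicate Gr\"onwall argument. Everything downstream---the $\omega$-limit analysis and the rigidity computation---is then comparatively soft.
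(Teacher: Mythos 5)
Your proposal is correct and follows essentially the same route as the paper: the time-independent two-sided bounds on $\theta$ and the Fisher-information/Gr\"onwall higher-order estimate give precompactness, the monotone entropy $\iab\log\theta\,dx$ is constant on the $\omega$-limit set, vanishing dissipation forces $\theta$ to be spatially (then temporally) constant, the rigidity of the clamped wave system forces $u\equiv0$, and energy conservation identifies $\theta_\infty$ --- the paper simply carries out your LaSalle argument by hand on sequences $t_n$ and $t_n+h$. The one step you leave implicit is that invariance of the $\omega$-limit set under the semiflow requires continuous dependence of solutions on initial data in $\habo\times\lab2\times\lab2$, which is a nontrivial stability estimate the paper proves separately (Proposition \ref{ciagzallip}).
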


Our theorem has a clear physical meaning. It shows that the damping effect of the heat propagation itself is so stabilizing that natural oscillations 
appearing in the string equation decay when coupled with heat. It is even more interesting in light of the recent result in \cite{Bies}, where it is 
pointed out that the given right-hand side in the heat equation generically cannot enforce the decay of oscillations. Let us discuss methods of the 
proof, emphasizing the role of physics-related estimates.

At first, we need to get the time-independent bounds of temperature. Both bounds are obtained using the Alikakos variation (\cite{alik}) on the Moser 
method, see \cite{Moser}. This is a delicate iterative method used in the case of upper-temperature bounds in thermoelasticity already in 
\cite{daf_hsiao}. A similar use was done in \cite{hrusa_m}. More interesting and novel is our iterative time-independent bound from below (by a 
positive constant) of the temperature. It is essential in the identification of the limits of the solutions when time goes to infinity. This is done 
via the Moser iteration on the negative part of the logarithm of temperature. Both Moser's iterations are presented in Section \ref{iteration}. Next, 
we obtain a time-independent higher-order estimate, which is necessary for the compactness estimates when time tends to infinity. Here, we use a 
functional from \cite{BC} involving the Fisher information of the temperature. However, since (unlike in \cite{BC}) we need a time-independent 
estimate, we need to treat the functional in a different way. On the one hand, we utilize a recent functional inequality from \cite{CFHS}. On the other 
hand, an $L^2(L^2)$ estimate of the gradient of entropy together with the $L^\infty$ estimate obtained via the Moser method, enable us to find a tricky 
Gr\"{o}nwall type inequality, which allows us to conclude a required compactness estimate. The latter is contained in Section \ref{compactness}. 
Finally, in the last section, we define the dynamical system on a properly chosen functional phase space. This can be done thanks to the uniqueness of 
solutions obtained in \cite{BC}. Then, the $\omega$-limit set of such a dynamical system is found for every initial data. For each initial data it 
turns out to contain only one point, a flat string with the uniformly distributed heat that can be identified knowing the initial data. Our method is a 
delicate reasoning making use of the quantitative version of the second principle of thermodynamics, including the entropy dissipation. The lower bound 
of the entropy is also very useful when identifying the asymptotic state.

\section{Preliminaries}
This section is devoted to recalling important information concerning the properties of solutions to \eqref{eq}, which we need in the sequel. Most of 
it is known, we state it in details for further reference.

First of all, on multiplying the upper equation in \eqref{eq} by $u_t$ and then integrating both equations of \eqref{eq}, one obtains the following 
energy identity.
\begin{prop}\label{energy_cons}
Regular solutions $\theta, u$ of \eqref{eq} satisfy
\begin{align}\label{balance}
\frac12\iab u_t^2dx+\frac12\iab u_x^2dx+\iab\theta dx=\frac12\iab v_0^2dx+\frac12\iab u_{0,x}^2dx+\iab\theta_0 dx.
\end{align}
\end{prop}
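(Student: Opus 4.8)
The plan is to differentiate the total energy appearing on the left-hand side of \eqref{balance} in time and to show that this derivative vanishes; the quantity is then constant and equals its value at $t=0$. The mechanism behind the conservation is a cancellation: the coupling term $\mu\iab\theta u_{tx}\,dx$ produced by the mechanical energy is exactly compensated by the same term arising when the heat equation is integrated in space.

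First I would test the upper equation $u_{tt}-u_{xx}=\mu\theta_x$ against $u_t$ and integrate over $(a,b)$. The inertial term gives $\iab u_{tt}u_t\,dx=\ddt\frac12\iab u_t^2\,dx$. For the elastic term I integrate by parts, $-\iab u_{xx}u_t\,dx=\iab u_x u_{tx}\,dx-[u_x u_t]_a^b$; since $u(\cdot,a)=u(\cdot,b)=0$ for all times, differentiating in $t$ gives $u_t(\cdot,a)=u_t(\cdot,b)=0$, so the boundary term drops and this contribution equals $\ddt\frac12\iab u_x^2\,dx$. On the right-hand side a further integration by parts yields $\mu\iab\theta_x u_t\,dx=\mu[\theta u_t]_a^b-\mu\iab\theta u_{tx}\,dx$, and the boundary term again vanishes because $u_t=0$ at the endpoints. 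Altogether the upper equation produces
\[
\ddt\left(\frac12\iab u_t^2\,dx+\frac12\iab u_x^2\,dx\right)=-\mu\iab\theta u_{tx}\,dx.
\]

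Next I would simply integrate the lower equation $\theta_t-\theta_{xx}=\mu\theta u_{tx}$ over $(a,b)$. The diffusion term is a total derivative, $\iab\theta_{xx}\,dx=[\theta_x]_a^b=0$ by the Neumann condition $\theta_x(\cdot,a)=\theta_x(\cdot,b)=0$, so this step gives $\ddt\iab\theta\,dx=\mu\iab\theta u_{tx}\,dx$. Adding the two identities, the right-hand sides cancel, whence $\ddt\left(\frac12\iab u_t^2\,dx+\frac12\iab u_x^2\,dx+\iab\theta\,dx\right)=0$. Integrating from $0$ to $t$ and inserting the initial conditions $u_t(0,\cdot)=v_0$, $u_x(0,\cdot)=u_{0,x}$ and $\theta(0,\cdot)=\theta_0$ gives \eqref{balance}.

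Because the solutions are assumed regular, every step above is legitimate, so there is no genuine analytic obstacle: the only point requiring attention is the exact matching of the two copies of $\mu\iab\theta u_{tx}\,dx$, which is precisely the structural feature of the model that renders the sum of kinetic, elastic and internal energy conserved. I would also note that regularity is what justifies differentiating the boundary condition in time and commuting $\ddt$ with the spatial integrals.
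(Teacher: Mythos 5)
Your proposal is correct and follows exactly the route the paper indicates: multiply the momentum equation by $u_t$, integrate by parts using the Dirichlet condition on $u$ (hence $u_t=0$ at the endpoints), integrate the heat equation using the Neumann condition on $\theta$, and add so that the two copies of $\mu\iab\theta u_{tx}\,dx$ cancel. Nothing to add.
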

Next, we recall the essential identity in \cite{BC}, which led to the global existence result there.
\begin{lem}\label{estlem}
Let us assume that $u$ and $\theta$, $\theta>0$ are regular enough solutions of (\ref{eq}). Then, the following identity holds
\begin{align*}
\frac12\ddt\left(\int_a^b\frac{\theta_x^2}{\theta}dx+\int_a^bu_{tx}^2dx+\int_a^bu_{xx}^2dx\right)=-\int_a^b\theta\left[(\log\theta)_{xx}\right]^2dx+\frac{\mu}2\int_a^b\frac{\theta^2_x}{\theta}u_{tx}dx.
\end{align*}
\end{lem}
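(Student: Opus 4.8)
The plan is to differentiate the three summands on the left separately and to show that, after integrations by parts in which every boundary contribution vanishes, all resulting terms either assemble into the single perfect square $-\int_a^b\theta[(\log\theta)_{xx}]^2\,dx$ or cancel pairwise across the summands, leaving only the advertised term $\frac{\mu}{2}\int_a^b\frac{\theta_x^2}{\theta}u_{tx}\,dx$.

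First I would treat the entropy-type summand. Differentiating under the integral gives
\[
\frac12\ddt\int_a^b\frac{\theta_x^2}{\theta}\,dx=\int_a^b\frac{\theta_x(\theta_t)_x}{\theta}\,dx-\frac12\int_a^b\frac{\theta_x^2}{\theta^2}\theta_t\,dx.
\]
Integrating the first integral by parts (the boundary term drops because $\theta_x(\cdot,a)=\theta_x(\cdot,b)=0$) turns it into $-\int_a^b(\log\theta)_{xx}\theta_t\,dx$, since $(\theta_x/\theta)_x=(\log\theta)_{xx}$. I would then substitute the heat equation $\theta_t=\theta_{xx}+\mu\theta u_{tx}$. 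The terms carrying $\mu$ can be rearranged, using the identity $\theta(\log\theta)_{xx}=\theta_{xx}-\theta_x^2/\theta$, into $\frac{\mu}{2}\int_a^b\frac{\theta_x^2}{\theta}u_{tx}\,dx-\mu\int_a^b\theta_{xx}u_{tx}\,dx$; the first of these is exactly the wanted right-hand side, while the stray term $-\mu\int_a^b\theta_{xx}u_{tx}\,dx$ is set aside for later cancellation.

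The purely diffusive part of this first summand is where the real work lies. Expanding $(\log\theta)_{xx}=\theta_{xx}/\theta-\theta_x^2/\theta^2$ yields an expression in the three integrals $\int_a^b\theta_{xx}^2/\theta$, $\int_a^b\theta_x^2\theta_{xx}/\theta^2$ and $\int_a^b\theta_x^4/\theta^3$, whereas the target $-\int_a^b\theta[(\log\theta)_{xx}]^2\,dx$ produces the same three integrals with different coefficients. To reconcile them I would use the integration-by-parts identity $\int_a^b\frac{\theta_x^2\theta_{xx}}{\theta^2}\,dx=\frac23\int_a^b\frac{\theta_x^4}{\theta^3}\,dx$, which follows from writing $\theta_x^2\theta_{xx}=\frac13(\theta_x^3)_x$ and integrating by parts (again the boundary term vanishes because $\theta_x=0$ at the endpoints). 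This relation makes the diffusive remainder collapse exactly onto $-\int_a^b\theta[(\log\theta)_{xx}]^2\,dx$.

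It then remains to handle $\frac12\ddt\int_a^b u_{tx}^2\,dx+\frac12\ddt\int_a^b u_{xx}^2\,dx=\int_a^b u_{tx}u_{ttx}\,dx+\int_a^b u_{xx}u_{txx}\,dx$. Differentiating the string equation in $x$ gives $u_{ttx}=u_{xxx}+\mu\theta_{xx}$, so the first integral becomes $\int_a^b u_{tx}u_{xxx}\,dx+\mu\int_a^b u_{tx}\theta_{xx}\,dx$. Integrating $\int_a^b u_{tx}u_{xxx}\,dx$ by parts leaves $[u_{tx}u_{xx}]_a^b-\int_a^b u_{txx}u_{xx}\,dx$, and this last integral cancels the $u_{xx}u_{txx}$ term; the surviving contributions are the boundary term and $\mu\int_a^b u_{tx}\theta_{xx}\,dx$, which cancels precisely the stray term isolated above. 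Finally I must argue that the boundary term vanishes: evaluating the wave equation at $x=a,b$ and using $u(\cdot,a)=u(\cdot,b)=0$ (hence $u_{tt}=0$ there) together with $\theta_x(\cdot,a)=\theta_x(\cdot,b)=0$ forces $u_{xx}(\cdot,a)=u_{xx}(\cdot,b)=0$, so $[u_{tx}u_{xx}]_a^b=0$. Collecting everything yields the stated identity. The main obstacle is the bookkeeping in the diffusive remainder of the first summand, where recognizing and applying the cubic integration-by-parts identity is essential to forcing the perfect square; a secondary subtlety is the observation that $u_{xx}$ vanishes at the ends, without which the boundary term would obstruct the cancellation.
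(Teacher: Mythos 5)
Your computation is correct and complete: the integration-by-parts steps, the cubic identity $\int_a^b\theta_x^2\theta_{xx}\theta^{-2}\,dx=\tfrac23\int_a^b\theta_x^4\theta^{-3}\,dx$, the cancellation of $-\mu\int_a^b\theta_{xx}u_{tx}\,dx$ against the contribution from the differentiated wave equation, and the observation that $u_{xx}$ vanishes at the endpoints (from $u_{tt}=0$ and $\theta_x=0$ there) all check out. Note that the paper itself does not prove this lemma but only recalls it from \cite{BC}, so there is no in-paper argument to compare against; your derivation is the standard direct verification one would expect.
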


Next, let us recall the precise notion of solutions to \eqref{eq} constructed in \cite{BC}.
\begin{defi}\label{weakdef}
We say that $(u,\theta)$ is a solution to (\ref{eq}) if:
\begin{itemize}
\item The initial data is of regularity
\begin{align*}
u_0\in H^2(a,b)\cap\habo,\quad v_0\in\habo,\quad \theta_0\in\hab.
\end{align*}
We also require that there exists $\tilde{\theta}>0$ such that $\theta_0(x)\geq\tilde{\theta}$ for all $x\in[a,b]$.
\item Solutions $\theta$ and $u$ satisfy
\begin{align*}
u&\in L^{\infty}_{\loc}(0,\infty;H^2(a,b))\cap W^{1,\infty}_{\loc}(0,\infty;\habo)\cap W^{2,\infty}_{\loc}(0,\infty;\lab{2}),\nonumber\\
\theta&\in L^{\infty}_{\loc}(0,\infty;\hab)\cap H^1_{\loc}(0,\infty; \lab{2}).
\end{align*}
\item The momentum equation
\begin{align*}
 u_{tt}- u_{xx}=\mu\theta_x
\end{align*}
is satisfied for almost all $(t,x)\in (0,\infty)\times (a,b)$.
\item The entropy equation
\begin{align*}
\iab\theta_t\psi dx+\iab\theta_x\psi_x dx =\mu\iab\theta u_{tx}\psi dx
\end{align*}
holds for all $\psi\in C^{\infty}[a,b]$ and for almost all $t\in (0,\infty)$.
\item Initial conditions are attained in the following sense:
\[
\theta \in C([0,\infty);L^2(a,b)),
\]
\[
u\in C([0,\infty);\habo),\ u_t \in C([0,\infty);L^2(a,b)).
\]
\end{itemize}
\end{defi}

\begin{rem}\label{rem1}
Let us emphasize that squaring the lower equation in \eqref{eq} and next integrating, one increases the regularity of solutions, obtaining that 
$\theta\in L^2_{\loc}(0,\infty;H^2(a,b))$, $\theta_t\in L^2_{\loc}(0,\infty;L^2(a,b))$. This way, we see that both $\theta_t$ and $\theta_{xx}$ are 
defined a.e. in $(0,T)\times (a,b)$ and that actually the entropy equation is satisfied for almost all $(t,x)\in (0,\infty)\times(a,b)$. Moreover, by 
the Gelfand triple (see, for instance, \cite{Boyer}), we notice that $\theta\in C([0,\infty);\hab)$.
\end{rem}

In \cite{BC}, the following existence and uniqueness result for solutions of \eqref{eq} is proven.

\begin{tw}\label{main_theorem}
For any constant $\mu\in \r$, there exists a unique global-in-time solution of \eqref{eq} in the sense of Definition \ref{weakdef} with a positive 
temperature.
\end{tw}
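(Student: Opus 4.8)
The plan is to combine a local-in-time construction with global a priori estimates, and then to treat positivity of the temperature and uniqueness separately. First I would produce local regular solutions on some interval $(0,T_0)$ by a Galerkin scheme, projecting onto the finite-dimensional eigenspaces of $-\partial_{xx}$ with Dirichlet data for $u$ and Neumann data for $\theta$, solving the resulting system of ODEs, and passing to the limit. The regularity demanded of $(u_0,v_0,\theta_0)$ in Definition~\ref{weakdef} is tailored to close the basic energy estimates for this scheme at the level of the norms listed there, and since $\theta_0\geq\tilde\theta>0$ with the approximate temperatures depending continuously on $t$, positivity of $\theta$ survives on a (possibly shorter) interval.

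The crux is to upgrade this local solution to a global one, i.e. to exclude finite-time blow-up of
\[
H(t):=\iab\frac{\theta_x^2}{\theta}\,dx+\iab u_{tx}^2\,dx+\iab u_{xx}^2\,dx .
\]
Proposition~\ref{energy_cons} already provides uniform bounds on $\|u_t\|_{L^2}$, $\|u_x\|_{L^2}$ and $\|\theta\|_{L^1}$, while Lemma~\ref{estlem} furnishes
\[
\tfrac12\ddt H(t)=-\iab\theta\big[(\log\theta)_{xx}\big]^2\,dx+\tfrac{\mu}{2}\iab\frac{\theta_x^2}{\theta}\,u_{tx}\,dx .
\]
The first term on the right, call it $-D(t)$, is a dissipation, so everything hinges on dominating the coupling term by $D$ plus a quantity that Gr\"onwall can absorb. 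Writing $w:=\log\theta$, so that $\theta_x^2/\theta=\theta w_x^2$ and $D=\int_a^b\theta w_{xx}^2\,dx$, and integrating by parts in $x$ (the boundary contributions vanish because $u_t=0$ at $x=a,b$), the coupling term becomes $-\tfrac{\mu}{2}\iab(\theta w_x^3+2\theta w_x w_{xx})u_t\,dx$.

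Here lies the main obstacle. The mixed piece $\int_a^b\theta w_x w_{xx}u_t\,dx$ can be split by Young's inequality into a small multiple of $D$ and a remainder controlled by $\|u_t\|_{L^\infty}^2\int_a^b\theta w_x^2\,dx$, and the cubic piece $\int_a^b\theta w_x^3 u_t\,dx$ can be reorganised (again integrating by parts, using $w_x=\theta_x/\theta=0$ at the endpoints) and estimated by one-dimensional interpolation against $D$. The difficulty is that the crude bound $\|u_t\|_{L^\infty}^2\lesssim\|u_t\|_{L^2}\|u_{tx}\|_{L^2}\lesssim H^{1/2}$ only yields a Riccati-type inequality $\ddt H\lesssim H^{3/2}$, which does not by itself rule out blow-up; the genuine content of the argument is to exploit the full strength of the dissipation $D$ --- rather than a cruder higher-order energy --- so as to downgrade this to a globally solvable (Gr\"onwall- or Osgood-type) differential inequality, whence $H$ stays finite on every bounded time interval. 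This is precisely the role of the Fisher-information structure singled out in Lemma~\ref{estlem}.

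With $H$ controlled, I would prove the quantitative positivity of $\theta$: since $\theta$ solves the parabolic equation $\theta_t-\theta_{xx}=\mu\theta u_{tx}$ with homogeneous Neumann data and $\theta_0\geq\tilde\theta$, a maximum-principle argument (equivalently, a lower bound for $\log\theta$, natural here since the whole functional machinery is built around $\log\theta$) keeps $\theta$ strictly positive, the now-global control of $u_{tx}$ rendering the source admissible; this also legitimises the manipulations with $\theta_x^2/\theta$ used above. Finally, uniqueness follows from an energy estimate on the difference of two solutions $(u^1-u^2,\theta^1-\theta^2)$: subtracting the systems, testing the momentum equation against $u^1_t-u^2_t$ and the entropy equation against $\theta^1-\theta^2$, and closing a Gr\"onwall inequality with the help of the $L^\infty$-in-time bound on $\theta$ and the $L^2$-in-time bound on $u_{tx}$ established above.
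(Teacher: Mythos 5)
The paper does not actually prove Theorem~\ref{main_theorem} here: it is imported from \cite{BC}, with only the remark that the proof goes through the half-Galerkin approximation of Definition~\ref{apdef} and a Fisher-information-based functional, i.e.\ Lemma~\ref{estlem}. Your outline matches that strategy (Galerkin construction, a priori control of $H(t)=\iab\theta_x^2/\theta\,dx+\iab u_{tx}^2\,dx+\iab u_{xx}^2\,dx$, positivity of $\theta$, and uniqueness via a Gr\"onwall estimate on differences, which is essentially Proposition~\ref{ciagzallip}). However, there is a genuine gap at the one step that carries the whole theorem: you never actually close the estimate of the coupling term $\frac{\mu}{2}\iab\frac{\theta_x^2}{\theta}u_{tx}\,dx$. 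You correctly observe that the crude bound gives only a Riccati inequality $\frac{d}{dt}H\lesssim H^{3/2}$, and then you assert that ``the genuine content of the argument is to exploit the full strength of the dissipation'' --- but that sentence is the proof, and it is missing. Moreover, the decomposition you choose (integrating by parts to produce $\iab\theta w_x^3u_t\,dx$ with $w=\log\theta$) creates a cubic term in $w_x$ for which the claimed ``one-dimensional interpolation against $D$'' is not substantiated and is not obviously available.

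The missing idea can be read off from how the same term is handled in Theorem~\ref{zwartosc} of this paper: write $\iab\frac{\theta_x^2}{\theta}u_{tx}\,dx=4\iab\bigl[\bigl(\theta^{\ul}\bigr)_x\bigr]^2u_{tx}\,dx=-8\iab\bigl(\theta^{\ul}\bigr)_x\bigl(\theta^{\ul}\bigr)_{xx}u_t\,dx$, estimate by $\|u_t\|_{L^\infty}\|(\theta^{\ul})_{xx}\|_{L^2}\|(\theta^{\ul})_x\|_{L^2}$, and then absorb $\|(\theta^{\ul})_{xx}\|_{L^2}^2$ into the entropy dissipation $\iab\theta[(\log\theta)_{xx}]^2dx$ via the functional inequality of Lemma~\ref{cfhslem}. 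What remains is (up to lower-order energy quantities) $C\,x(t)+C\,x(t)y(t)$ with $x(t)=\iab\theta_x^2/\theta\,dx$, and since $\int_0^T x(s)\,ds<\infty$ (from \eqref{taul2} together with an $L^\infty$ bound on $\theta$), this is a genuinely solvable Gr\"onwall inequality rather than a Riccati one. Without Lemma~\ref{cfhslem} (or an equivalent device linking $\|(\theta^{\ul})_{xx}\|_{L^2}^2$ to the dissipation) your scheme does not rule out finite-time blow-up, so the global-existence claim is not established. The positivity step is likewise only sketched: the quantitative lower bound $\theta\geq m_T>0$ requires an argument on $\log\theta$ using \eqref{taurownanie} (or a Moser-type iteration as in Theorem~\ref{Twoddolu}), not merely an appeal to the maximum principle with an $L^2$-in-space source.
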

The theorem is proven via the approximation with the half-Galerkin method. The estimates of an approximation sequence are strong enough to show that in 
the limit \eqref{eq} is satisfied. We shall need the approximation sequence again, so we recall its definition below. The existence of the solutions to 
the following approximating problem is given in \cite{TC2}. Let $\{\phi_i\}$ be a smooth basis of $\habo$ and let us denote 
$V_n=\operatorname{span}\{\phi_k\}_{1\leq k\leq n}$.

\begin{defi}\label{apdef}
We say that $u_n\in W^{1,\infty}_{\loc}(0,\infty;V_n)$ and $\theta_n\in L^2_{\loc}(0,\infty;H^1(a,b))$ are solutions of an approximate problem if for 
all $\phi\in V_n$ and $\psi\in H^1(a,b)$ the following equations are satisfied in $\mathcal{D}'(0,\infty)$
\begin{align*}
\frac{d^2}{dt^2}\iab u_n\phi dx+\iab u_{n,x}\phi_x dx&=-\mu\iab\theta_n\phi_x dx,\\
\ddt\iab\theta_n\psi dx+\iab \theta_{n,x}\psi_xdx&=\mu\iab\theta_n u_{n,tx}\psi dx.
\end{align*}
The initial identities
\begin{align*}
\theta_n(0)=\theta_{0},\quad u_n(0)=P_{V_n}u_0,\quad u_{n,t}(0)=P_{V_n}v_0
\end{align*}
hold, where $P_{V_n}$ is an orthogonal projection of the corresponding spaces onto $V_n$.
\end{defi}
\cite[Proposition 1 and Lemma 4.2]{TC2} states the following.
\begin{prop}\label{aproxex}
For every $n\in\mathbb{N}$, there exists a solution $(u_n,\theta_n)$ of the approximate problem in the sense of Definition \ref{apdef}. Moreover, one 
has $\theta_n\in L^2_{\loc}(0,\infty;H^2(a,b))\cap H^1_{\loc}(0,\infty;L^2(a,b))$ and $\theta_n(x,t)>0$ for almost all 
$(t,x)\in[0,\infty)\times[a,b]$.
\end{prop}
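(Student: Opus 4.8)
The plan is to exploit the half-Galerkin structure, in which $u_n$ is constrained to the finite-dimensional space $V_n$ while $\theta_n$ is not discretized. Writing $u_n(t,x)=\sum_{k=1}^n c_k(t)\phi_k(x)$ and testing the first equation of Definition \ref{apdef} with each basis element, the hyperbolic part collapses to a second-order linear ODE system
\begin{equation*}
\sum_{j=1}^n M_{jk}\ddot c_j+\sum_{j=1}^n S_{jk}c_j=-\mu\iab\theta_n\phi_{k,x}\,dx,\qquad M_{jk}=\iab\phi_j\phi_k\,dx,\ \ S_{jk}=\iab\phi_{j,x}\phi_{k,x}\,dx,
\end{equation*}
whose right-hand side is a linear functional of $\theta_n$. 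On the other hand, once the coefficients $c$ are frozen, the second equation is the \emph{linear} parabolic problem $\theta_{n,t}-\theta_{n,xx}=\mu\bigl(\sum_j\dot c_j\phi_{j,x}\bigr)\theta_n$ with the homogeneous Neumann condition (the natural boundary condition produced by testing against an unconstrained $\psi\in H^1(a,b)$) and initial datum $\theta_0$. Thus the whole problem is a coupled ODE--parabolic system that is linear in each unknown separately.

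First I would prove local-in-time existence by a fixed-point argument on a short interval $[0,T_*]$. Given $(c,\theta_n)$, I would (i) solve the linear parabolic equation with the bounded potential $V:=\mu\sum_j\dot c_j\phi_{j,x}\in L^\infty$, which is well-posed and, by standard parabolic theory, yields a solution in $L^2(0,T_*;H^2(a,b))\cap H^1(0,T_*;L^2(a,b))$, and (ii) feed the resulting temperature into the linear ODE system to update $c$. Using the smoothing of the heat semigroup together with the boundedness of the fixed basis functions $\phi_k$, one checks that the resulting map is a contraction on a suitable product space when $T_*$ is small, which produces a unique local solution.

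Next I would derive the a priori estimate that upgrades local to global existence. Testing the ODE system with $\dot c$ (that is, taking $\phi=u_{n,t}\in V_n$) and the parabolic equation with $\psi\equiv1$, and adding the two resulting identities, the coupling terms $\pm\mu\iab\theta_n u_{n,tx}\,dx$ cancel exactly, producing the discrete energy conservation
\begin{equation*}
\ddt\left(\frac12\iab u_{n,t}^2\,dx+\frac12\iab u_{n,x}^2\,dx+\iab\theta_n\,dx\right)=0.
\end{equation*}
Positivity of $\theta_n$, which I obtain from the weak (and then strong) maximum principle applied to the linear parabolic equation with bounded zero-order coefficient $V$, so that $\theta_0>0$ forces $\theta_n>0$, makes the last term on the left nonnegative. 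Hence $\|u_{n,t}\|_{\lab2}$ and $\|u_{n,x}\|_{\lab2}$ are bounded uniformly in time; since on the finite-dimensional $V_n$ these norms are equivalent to $|\dot c|$ and $|c|$, the ODE coefficients cannot blow up in finite time, $V$ stays bounded on every finite interval, and the local solution extends to all of $(0,\infty)$.

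The higher regularity $\theta_n\in L^2_{\loc}(0,\infty;H^2(a,b))\cap H^1_{\loc}(0,\infty;L^2(a,b))$ is then read off from parabolic regularity for $\theta_{n,t}-\theta_{n,xx}=V\theta_n$: testing with $\theta_n$ and using Gr\"onwall first gives $\theta_n\in L^\infty_{\loc}(0,\infty;\lab2)\cap L^2_{\loc}(0,\infty;\hab)$, so the right-hand side $V\theta_n$ lies in $L^2_{\loc}(0,\infty;\lab2)$, and the standard maximal-regularity estimate closes the argument. I expect the main obstacle to be the interlocking of the hyperbolic and parabolic parts: the uniform bound on $\dot c$ requires the nonnegativity of $\theta_n$, while the solvability and positivity of the parabolic equation require control of $V=\mu\sum_j\dot c_j\phi_{j,x}$. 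The energy identity is precisely what breaks this circularity, because the cross terms cancel; after that, the maximum principle and the finite dimensionality of $V_n$ do the rest.
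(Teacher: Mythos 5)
Your argument is sound, but note that the paper itself does not prove this proposition: it is imported verbatim from \cite[Proposition 1 and Lemma 4.2]{TC2}, so there is no internal proof to compare against. Judged on its own merits, your construction is the natural one for the half-Galerkin scheme and it works: the decoupling into a finite-dimensional second-order ODE system for the coefficients $c$ and a \emph{linear} Neumann heat equation with bounded potential $V=\mu\sum_j\dot c_j\phi_{j,x}$, a contraction on a short interval, the cancellation of the coupling terms upon testing with $\phi=u_{n,t}$ and $\psi\equiv 1$, positivity via the maximum principle (more robustly, via testing with $\eta^-$ where $\eta=\theta_n-\tilde\theta e^{-\lambda t}$, $\lambda\geq\|V\|_{L^\infty}$, which also yields the quantitative lower bound used later), and maximal parabolic regularity for the $L^2_{\loc}(H^2)\cap H^1_{\loc}(L^2)$ claim, which is available here because $\theta_0\in\hab$ is exactly the right trace space and no compatibility condition is needed for the Neumann problem. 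You correctly identify and resolve the only delicate point, the apparent circularity between positivity of $\theta_n$ (needed to turn the conserved energy into a bound on $|c|+|\dot c|$) and boundedness of $V$ (needed for positivity): on any compact subinterval of the maximal existence interval both hold, so the energy identity yields a time-uniform bound and the solution is global. The cited construction in \cite{TC2} is carried out in the more involved 3D setting; your 1D version is more elementary but delivers exactly the statement needed here, so as a self-contained substitute for the citation it is acceptable.
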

Due to the regularity of Galerkin bases, we have that $u_n\in C^{3,\infty}([0,T]\times[a,b])$ (see \cite{evans}). Similarly, by standard regularity 
theory of parabolic equations, we have $\theta_n\in C^{1,2}([0,T]\times[a,b])$ (see \cite[Chapter VII]{lady} or \cite[Chapter 5]{frid}).

Next, we introduce a notation $\tau:=\log\theta$.  The following observation related to $\tau$ will be crucial both in the compactness estimates as 
well as the asymptotic studies. The part \eqref{taul2} can be found already in \cite{TC2}. For the reader's convenience, we present the details.
\begin{prop}\label{ogrzpop}
Let $\theta$ be a weak solution of the problem \eqref{eq} in the sense of Definition \ref{apdef}. Then, the following equation
\begin{align*}
\ddt\iab\tau dx=\iab\tau_x^2dx
\end{align*}
is satisfied for almost all $t\in (0,\infty)$. Moreover, we have that
\begin{align}\label{taul2}
\tau_x\in L^2((0,\infty)\times (a,b)).
\end{align}
\end{prop}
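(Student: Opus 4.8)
The plan is to read off a pointwise equation for $\tau=\log\theta$ directly from the entropy equation and then integrate it in space. Since we work with a solution in the sense of Definition \ref{apdef}, the relevant function $\theta=\theta_n$ is of class $C^{1,2}$ and strictly positive, so on every compact time interval it is bounded below by a positive constant; this is what makes the ensuing manipulations rigorous. First I would divide the (classically satisfied) entropy equation $\theta_t-\theta_{xx}=\mu\theta u_{tx}$ by $\theta>0$ and use the algebraic identities $\tau_t=\theta_t/\theta$ and $\theta_{xx}/\theta=\tau_{xx}+\tau_x^2$ to obtain
\[
\tau_t-\tau_{xx}-\tau_x^2=\mu u_{tx}.
\]

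Next I would integrate this identity over $(a,b)$ and check that the two extra terms disappear. The diffusion term gives $\iab\tau_{xx}dx=\tau_x(b)-\tau_x(a)$, which vanishes because testing the entropy equation against arbitrary $\psi\in\hab$ and integrating by parts forces the Neumann condition $\theta_x(\cdot,a)=\theta_x(\cdot,b)=0$, and hence $\tau_x=\theta_x/\theta$ vanishes at both endpoints. The forcing term gives $\mu\iab u_{tx}dx=\mu\bigl(u_t(b)-u_t(a)\bigr)=0$, since $u=u_n\in V_n\subset\habo$ vanishes at $a$ and $b$ for every $t$, so the same holds for $u_{n,t}$. What is left is precisely
\[
\ddt\iab\tau dx=\iab\tau_x^2dx,
\]
which is the first assertion.

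For \eqref{taul2} I would integrate the last identity in time over $(0,T)$, getting $\int_0^T\!\iab\tau_x^2\,dx\,dt=\iab\tau(T)dx-\iab\tau(0)dx$, and then bound the right-hand side uniformly in $T$. The initial term $\iab\log\theta_0dx$ is finite, because $\theta_0\in\hab$ embeds into $L^\infty(a,b)$ in one dimension and $\theta_0\geq\tilde\theta>0$. The decisive point is the upper bound on $\iab\tau(T)dx$: by concavity of the logarithm (Jensen's inequality),
\[
\frac1{b-a}\iab\log\theta(T)dx\leq\log\left(\frac1{b-a}\iab\theta(T)dx\right),
\]
while the energy identity of Proposition \ref{energy_cons} — which the approximate solutions also satisfy, upon testing the momentum equation with $u_{n,t}\in V_n$ and the entropy equation with $\psi\equiv1$, and using that the projections $P_{V_n}$ do not increase the relevant norms — yields the time-independent bound $\iab\theta(T)dx\leq E_0$, where $E_0:=\frac12\iab v_0^2dx+\frac12\iab u_{0,x}^2dx+\iab\theta_0dx$. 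Monotonicity of $\log$ then gives
\[
\int_0^T\iab\tau_x^2\,dx\,dt\leq(b-a)\log\frac{E_0}{b-a}-\iab\log\theta_0dx,
\]
with the right-hand side independent of $T$, so letting $T\to\infty$ delivers $\tau_x\in L^2((0,\infty)\times(a,b))$. I expect the main obstacle to be the rigorous justification of dividing by $\theta$ and of the vanishing boundary terms; once the positivity and $C^{1,2}$ regularity of $\theta_n$ are secured, the essential mechanism is the interplay between Jensen's inequality and energy conservation, which turns the otherwise uncontrolled growth of $\iab\tau dx$ into a time-uniform bound on the dissipation $\iab\tau_x^2dx$.
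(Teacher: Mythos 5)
Your proof is correct and follows essentially the same route as the paper: derive the pointwise equation $\tau_t-\tau_{xx}-\tau_x^2=\mu u_{tx}$, integrate in space (with the boundary terms vanishing), and then bound $\iab\tau\,dx$ from above uniformly in time via energy conservation. The only cosmetic difference is in the last step, where you invoke Jensen's inequality while the paper adds the integrated identity to the energy balance and uses the pointwise positivity of $\theta-\log\theta$; both exploit exactly the same mechanism.
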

\begin{proof}
By Remark \ref{rem1} the second equation in \eqref{eq} is satisfied pointwise. Hence, we obtain
\begin{align}\label{taurownanie}
\tau_t-\tau_{xx}-\tau^2_x=\mu u_{tx}.
\end{align}
We integrate it over $[a,b]$ to get
\begin{align*}
\frac{d}{dt}\iab\tau dx=\iab\tau_x^2dx.
\end{align*}
Next, we integrate over $[0,t]$ for an arbitrary $t>0$. We have
\begin{align*}
\iab\tau dx=\int_0^t\iab\tau_x^2dxds+\iab\tau_0dx
\end{align*}
where $\tau_0:=\log\theta_0$. Adding the latter to \eqref{balance}, we obtain
\begin{align*}
\frac12\iab u_t^2dx&+\frac12\iab u_x^2dx+\iab\theta dx-\iab\tau dx+\int_0^t\iab\tau_x^2dxds\\
&=\frac12\iab v_0^2dx+\frac12\iab u_{0,x}^2dx+\iab\theta_0 dx-\iab\tau_0 dx<\infty
\end{align*}
since
\[
\iab\theta dx-\iab\tau dx>0.
\]
The proof is finished.
\end{proof}
Notice that the first claim of Proposition \ref{ogrzpop} is a quantitative version of the second principle of thermodynamics.
It will be particularly useful in studying the asymptotic behavior of solutions. On the other hand, \eqref{taurownanie} turns out to be crucial
in obtaining the compactness estimates. Finally, notice that the proof of Proposition \ref{ogrzpop} and \eqref{balance} imply that there exists $C>0$ 
such that independently of time
\begin{align}\label{tauosz}
\iab |\tau| dx<C.
\end{align}
The same proposition applies to the approximation sequence $\tau_n:=\log\theta_n$ for all $n\in\n$.

Finally, we recall an identity from \cite{CFHS}.
\begin{lem}\label{cfhslem}
Let us consider a positive function $\psi\in C^2([a,b])$, $\psi_x(a)=\psi_x(b)=0$, then the following inequality
$$
\iab\left[\left(\psi^{\ul}\right)_{xx}\right]^2dx\leq \frac{13}{8}\iab\psi\left[\left(\log\psi\right)_{xx}\right]^2dx
$$
holds.
\end{lem}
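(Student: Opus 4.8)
The plan is to pass to the logarithmic variable $\tau:=\log\psi$, which is legitimate since $\psi\in C^2([a,b])$ is strictly positive; then $\tau\in C^2([a,b])$, and the hypothesis $\psi_x(a)=\psi_x(b)=0$ becomes $\tau_x(a)=\tau_x(b)=0$ because $\tau_x=\psi_x/\psi$. I would rewrite both sides in terms of $\tau$. A direct differentiation gives $(\psi^{1/2})_{xx}=e^{\tau/2}\big(\tfrac12\tau_{xx}+\tfrac14\tau_x^2\big)$ and $\psi[(\log\psi)_{xx}]^2=e^{\tau}\tau_{xx}^2$, so that
\[
[(\psi^{1/2})_{xx}]^2=e^{\tau}\Big(\tfrac14\tau_{xx}^2+\tfrac14\tau_{xx}\tau_x^2+\tfrac1{16}\tau_x^4\Big),\qquad \psi[(\log\psi)_{xx}]^2=e^{\tau}\tau_{xx}^2.
\]
It is worth stressing at the outset that no pointwise comparison is available: at points where $\tau_{xx}=0$ the left integrand equals $\tfrac1{16}e^{\tau}\tau_x^4$ while the right one vanishes. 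Hence the inequality cannot follow from a pointwise estimate and must come from an integrated cancellation.

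The key step is to eliminate the cross term $\int_a^b e^{\tau}\tau_{xx}\tau_x^2\,dx$ by integration by parts. Writing $\tau_x^2\tau_{xx}=\tfrac13(\tau_x^3)_x$ and using $(e^{\tau})_x=e^{\tau}\tau_x$, the boundary contribution $\tfrac13\big[e^{\tau}\tau_x^3\big]_a^b$ vanishes precisely because $\tau_x(a)=\tau_x(b)=0$, leaving
\[
\int_a^b e^{\tau}\tau_{xx}\tau_x^2\,dx=-\tfrac13\int_a^b e^{\tau}\tau_x^4\,dx.
\]
Substituting this into the expression for $\int_a^b[(\psi^{1/2})_{xx}]^2\,dx$ collapses the quartic contributions, since $-\tfrac1{12}+\tfrac1{16}=-\tfrac1{48}$, and yields
\[
\int_a^b[(\psi^{1/2})_{xx}]^2\,dx=\tfrac14\int_a^b e^{\tau}\tau_{xx}^2\,dx-\tfrac1{48}\int_a^b e^{\tau}\tau_x^4\,dx.
\]

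The quartic term is nonnegative and enters with a negative sign, so I would simply discard it to obtain $\int_a^b[(\psi^{1/2})_{xx}]^2\,dx\le\tfrac14\int_a^b e^{\tau}\tau_{xx}^2\,dx=\tfrac14\int_a^b\psi[(\log\psi)_{xx}]^2\,dx$, which is the claimed inequality with the even smaller constant $\tfrac14\le\tfrac{13}{8}$. I expect the main (indeed the only) obstacle to be the justification of the integration by parts and, in particular, the vanishing of the boundary term; that is exactly where the endpoint condition $\psi_x=0$ is used, and once it is secured the remainder is bookkeeping. As a byproduct the computation shows that the constant $\tfrac{13}{8}$ is far from sharp, so there is ample slack and one need not keep track of the quartic term at all.
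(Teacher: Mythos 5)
Your argument is correct and complete. Writing $\tau=\log\psi$, the identities $(\psi^{1/2})_{xx}=e^{\tau/2}\bigl(\tfrac12\tau_{xx}+\tfrac14\tau_x^2\bigr)$ and $\psi[(\log\psi)_{xx}]^2=e^{\tau}\tau_{xx}^2$ are right, the integration by parts
\[
\int_a^b e^{\tau}\tau_{xx}\tau_x^2\,dx=\tfrac13\bigl[e^{\tau}\tau_x^3\bigr]_a^b-\tfrac13\int_a^b e^{\tau}\tau_x^4\,dx=-\tfrac13\int_a^b e^{\tau}\tau_x^4\,dx
\]
is fully justified for $C^2$ data with $\tau_x=\psi_x/\psi$ vanishing at the endpoints, and the bookkeeping $-\tfrac1{12}+\tfrac1{16}=-\tfrac1{48}$ is correct. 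The paper itself offers no proof to compare against: the lemma is imported verbatim from \cite{CFHS}, where it is established in the multidimensional setting (there the Bochner-type integrations by parts generate extra curvature/cross terms, which is where the constant $\tfrac{13}{8}$ comes from). Your one-dimensional computation is genuinely different in that it is an exact identity,
\[
\int_a^b\bigl[(\psi^{1/2})_{xx}\bigr]^2dx=\tfrac14\int_a^b\psi\bigl[(\log\psi)_{xx}\bigr]^2dx-\tfrac1{48}\int_a^b\frac{\psi_x^4}{\psi^3}\,dx,
\]
which not only implies the stated bound but improves the constant to $\tfrac14$; linearizing around a constant ($\psi=1+\epsilon\phi$ with $\phi_x=0$ at the endpoints) shows $\tfrac14$ is in fact optimal. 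Your remark that no pointwise comparison is possible is also apt and explains why the boundary condition is genuinely needed. For the purposes of this paper nothing changes downstream — the lemma is only used in \eqref{dwa} to absorb $\epsilon\|(\theta^{1/2})_{xx}\|_{L^2}^2$ into the dissipation term, and any finite constant suffices there — but your sharper constant would allow a larger admissible $\epsilon$.
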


\section{Time-independent estimates}\label{iteration}

The present section is devoted to the time-independent estimates of the temperature. On the one hand, the bound of the $L^\infty$ norm of $\theta$ is 
obtained via the Moser type (\cite{Moser}) iteration in the spirit of Alikakos (\cite{alik}). The latter is a delicate iterative procedure, but it is 
not fully new in the thermoelasticity. It has already been used in this context in \cite{daf_hsiao}. Still, it is a crucial part of our reasoning. In 
particular, in the following corollary, we shall utilize the $L^\infty$ bound to estimate the $L^2(L^2)$ norm of the gradient of $\theta^{\ul}$ 
independently on time. The content of the corollary will be essential in the compactness argument in Section \ref{compactness}. Last but not least, the 
last estimate of the present section is a time-independent bound from below of the temperature. Again, it is achieved via the Moser-type iteration. 
This time, however, the iteration is developed on the negative part of the entropy (the logarithm of temperature), requires the precise utilization of 
\eqref{taurownanie}, and seems a novel step in the field. The claim of the last theorem will be essential in identifying the asymptotic states of a 
heated string.
\begin{tw}\label{oszth}
Let $\theta$ be a weak solution of the problem \eqref{eq} in the sense of Definition \ref{apdef}. Then
$$
\theta \in L^{\infty}((0,\infty)\times(a,b)).
$$
\end{tw}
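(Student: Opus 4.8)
The plan is to run an Alikakos-type Moser iteration directly on the temperature equation, anchored by the uniform-in-time bound $\int_a^b\theta\,dx\le C$ coming from the energy identity \eqref{balance}. First I would test the second equation of \eqref{eq}, which holds pointwise by Remark \ref{rem1}, with $p\theta^{p-1}$ for $p\ge 2$. Integrating over $(a,b)$ and integrating by parts in $x$, the Neumann conditions $\theta_x(\cdot,a)=\theta_x(\cdot,b)=0$ kill the boundary terms and produce the dissipation, giving
\begin{align*}
\ddt\int_a^b\theta^p\,dx+\frac{4(p-1)}{p}\int_a^b\left[\left(\theta^{p/2}\right)_x\right]^2dx=\mu p\int_a^b\theta^p u_{tx}\,dx,
\end{align*}
where I used $p(p-1)\theta^{p-2}\theta_x^2=\frac{4(p-1)}{p}[(\theta^{p/2})_x]^2$.

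The heat-production term on the right is the main obstacle. I would integrate by parts in $x$ once more; since $u(\cdot,a)=u(\cdot,b)=0$ gives $u_t=0$ at the endpoints, the boundary contribution vanishes and $\mu p\int_a^b\theta^p u_{tx}\,dx=-\mu p^2\int_a^b\theta^{p-1}\theta_x u_t\,dx=-2\mu p\int_a^b\theta^{p/2}\left(\theta^{p/2}\right)_x u_t\,dx$. By Cauchy--Schwarz and Young's inequality I would absorb half of the dissipation, leaving a forcing term controlled by $\|u_t\|_{L^\infty}^2\int_a^b\theta^p\,dx$ (in one dimension $\|u_t\|_{L^\infty}\le C\|u_{tx}\|_{L^2}$, as $u_t$ vanishes at the endpoints). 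The genuine difficulty is that this forcing must be rendered \emph{time-independent}: unlike the energy identity, which controls $\|u_t\|_{L^2}$ uniformly, the strain rate $u_{tx}$ is a higher-order quantity, and the whole delicacy---precisely the point where the Dafermos--Hsiao scheme must be adapted---is to keep the production subcritical with respect to the dissipation. I would handle it by interpolation, bounding $\|u_t\|_{L^\infty}$ by the uniformly controlled $\|u_t\|_{L^2}$ together with a fractional power of $\|u_{tx}\|_{L^2}$ (Gagliardo--Nirenberg), so that the mechanical dependence enters only through energy-controlled quantities and through a small multiple of the dissipation that can again be absorbed.

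With the forcing tamed, I would convert the problem into a recursion. Writing $w=\theta^{p/2}$, the one-dimensional Gagliardo--Nirenberg inequality $\|w\|_{L^2}^2\le\epsilon\|w_x\|_{L^2}^2+C_\epsilon\|w\|_{L^1}^2$ turns the dissipation into a damping term $+c\int_a^b\theta^p\,dx$ at the cost of the lower-level quantity $(\int_a^b\theta^{p/2}\,dx)^2$. This yields, for each $p$, a differential inequality of the form $\ddt\int_a^b\theta^p\,dx+c\int_a^b\theta^p\,dx\le C_p\big((\int_a^b\theta^{p/2}\,dx)^2+1\big)$ whose right-hand side is uniformly bounded by the previous iteration level, so that $\sup_{t\ge0}\int_a^b\theta^p\,dx$ is finite and time-independent.

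Finally, I would set $p=p_k=2^k$ and track the constants $C_{p_k}$ through the recursion, exactly as in Alikakos, to show that $a_k:=\sup_{t\ge0}\|\theta(t)\|_{L^{p_k}(a,b)}$ stays bounded as $k\to\infty$; the base level $a_0$ is finite by $\int_a^b\theta\,dx\le C$. Passing to the limit $k\to\infty$ then gives $\sup_{t\ge0}\|\theta(t)\|_{L^\infty(a,b)}<\infty$, which is the claim. I expect the bookkeeping of the $p$-dependent constants, and above all the time-independent control of the production term $\mu\int_a^b\theta^p u_{tx}\,dx$, to be the crux of the argument.
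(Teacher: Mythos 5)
Your overall architecture coincides with the paper's: an Alikakos--Moser iteration on $\int_a^b\theta^{2^k}dx$, anchored at the time-independent $L^1$ bound from \eqref{balance}, with the Gagliardo--Nirenberg inequality converting the dissipation $\int_a^b[(\theta^{p/2})_x]^2dx$ into a damping term at the cost of the previous iteration level, and Alikakos-style bookkeeping of the $p$-dependent constants. The differential identity you derive and the integration by parts $\mu p\int_a^b\theta^pu_{tx}\,dx=-2\mu p\int_a^b\theta^{p/2}(\theta^{p/2})_xu_t\,dx$ are exactly the paper's \eqref{nin2}--\eqref{nin1}.

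However, your treatment of the production term --- which you correctly identify as the crux --- has a genuine gap. You apply H\"older with $u_t$ in $L^\infty$ and both $\theta^{p/2}$, $(\theta^{p/2})_x$ in $L^2$, producing a forcing term $C p^2\|u_t\|_{L^\infty}^2\int_a^b\theta^p dx$. This fails for two reasons. First, $\|u_t\|_{L^\infty}\lesssim\|u_t\|_{L^2}^{1/2}\|u_{tx}\|_{L^2}^{1/2}$ still involves $\|u_{tx}\|_{L^2}$, which is \emph{not} bounded uniformly in time at this stage: that bound is the content of Theorem \ref{zwartosc} in Section \ref{compactness}, whose proof uses Corollary \ref{oszcor}, which in turn uses Theorem \ref{oszth} --- so invoking it here is circular. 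Second, there is nothing to absorb it into: the only dissipation in the $\theta^p$ energy identity is $\int_a^b[(\theta^{p/2})_x]^2dx$; no $u_{tx}$-dissipation appears anywhere in this computation, so "a small multiple of the dissipation" cannot accommodate a power of $\|u_{tx}\|_{L^2}$. Moreover, the forcing sits at the \emph{same} level $\int_a^b\theta^pdx$ with an uncontrolled, time-dependent coefficient, so the recursion does not close. The fix is a different H\"older splitting, which is what the paper does: keep $u_t$ in $L^2$ (uniformly controlled by \eqref{balance}), put $(\theta^{p/2})_x$ in $L^2$ and $\theta^{p/2}$ in $L^\infty$, and then interpolate
\begin{align*}
\|\theta^{p/2}\|_{\lab\infty}\leq C_1\|(\theta^{p/2})_x\|_{\lab2}^{\frac23}\|\theta^{p/2}\|_{\lab1}^{\frac13}+C_2\|\theta^{p/2}\|_{\lab1},
\end{align*}
so that Young's inequality with exponent $\frac53<2$ absorbs the result into the dissipation and leaves only $\|\theta^{p/2}\|_{\lab1}^2=\|\theta\|_{\lab{p/2}}^{p}$, i.e., precisely the previous iteration level. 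With that single change your scheme becomes the paper's proof.
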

\begin{proof}
In the first step, we show that
$$
\theta\in{L^{\infty}(0,\infty;\lab 2)}.
$$
This step seems redundant since all the reasoning is contained in the iteration step, and iteration might be started at the level of the first norm of 
$\theta$ (estimated time-independently in Proposition \ref{energy_cons}). It is, however, illustrative, and the reader sees precisely what comes next. 
Let us multiply the lower equation in (\ref{eq}) by $\theta$ and integrate over $(a,b)$. Then
$$
\iab\theta_t\theta dx-\iab\theta_{xx}\theta dx=\mu\iab u_{tx}\theta^2dx.
$$
It leads to the identity
\begin{align}\label{thin2}
\frac12\ddt\iab\theta^2dx+\iab\theta^2_xdx=\mu\iab u_{tx}\theta^2dx.
\end{align}
We shall estimate the right-hand side of the above equality.
\begin{align*}
\mu\iab u_{tx}\theta^2dx=-2\mu\iab u_t\theta_x\theta dx\leq C\|u_t\|_{\lab2}\|\theta\|_{\lab\infty}\|\theta_x\|_{\lab2}.
\end{align*}
By \eqref{balance}, we know that $\|u_t\|_{\lab2}$ is bounded independently of time. Next, the norm $\|\theta\|_{\lab\infty}$ is bounded via the 
Gagliardo-Nirenberg inequality.
\begin{align*}
\|\theta\|_{\lab\infty}\leq C_1\|\theta_x\|_{\lab 2}^{\frac23}\|\theta\|_{\lab1}^{\frac13}+C_2\|\theta\|_{\lab1}
\end{align*}
Again, by \eqref{balance}, we know that $\|\theta\|_{\lab1}$ is bounded independently of time. Therefore, we have
\begin{align*}
\mu\iab u_{tx}\theta^2dx=-2\mu\iab u_t\theta_x\theta dx\leq C_1\|\theta_x\|^{\frac53}_{\lab2}+C_2\|\theta_x\|_{\lab2}.
\end{align*}
Now, we use the Young inequality to deduce
\begin{align*}
\mu\iab u_{tx}\theta^2dx\leq \frac12 \|\theta_x\|_{\lab2}^2+C.
\end{align*}
Plugging it into \eqref{thin2} yields
\begin{align}\label{thin3}
\ddt\|\theta\|_{\lab2}^2+\|\theta_x\|_{\lab2}^2\leq C.
\end{align}
Next use of the Gagliardo-Nirenberg inequality gives
$$
\|\theta\|_{\lab2}\leq C_1\|\theta_x\|_{\lab2}^{\frac13}\|\theta\|_{\lab1}^{\frac23}+C_2\|\theta\|_{\lab1}.
$$
Hence,
\begin{align*}
\|\theta\|_{\lab2}^2\leq C_1\|\theta_x\|_{\lab2}^2+C_2.
\end{align*}

Plugging the above inequality into \eqref{thin3}, we obtain
\begin{align*}
\ddt\|\theta\|_{\lab2}^2+C_1\|\theta\|_{\lab2}^2\leq C_2,
\end{align*}
where $C_1, C_2$ do not depend on time. Now, we immediately get
\begin{align*}
\|\theta\|_{\lab2}^2\leq \|\theta_0\|_{\lab2}^2+\frac{C_2}{C_1}.
\end{align*}
Hence,
\begin{align*}
\|\theta\|_{L^{\infty}(0,T;\lab2)}\leq C.
\end{align*}
Since $C$ is time-independent, we obtain
\begin{align*}
\|\theta\|_{L^{\infty}(0,\infty;\lab2)}\leq C.
\end{align*}

Now, we will estimate the norm $\|\theta\|_{L^{\infty}\left(0,\infty;\lab{2^{n+1}}\right)}$ for an arbitrary $n\in\n$.
Let us multiply the heat equation in \eqref{eq} by $\theta^{2^{n+1}-1}$ and integrate over $(a,b)$. After integration by parts, we have
\begin{align}\label{nin2}
\frac1{2^{n+1}}\ddt\iab\theta^{2^{n+1}}dx+\left(\frac1{2^{n-1}}-\frac1{4^n}\right)\iab\left[\left(\theta^{2^n}\right)_x\right]^2dx=\mu\iab 
u_{tx}\theta^{2^{n+1}}dx.
\end{align}
We estimate the right-hand side of the above equality
\begin{align}\label{nin1}
\mu\iab u_{tx}\theta^{2^{n+1}}dx&=-\mu 2^{n+1}\iab u_t\theta_x\theta^{2^{n+1}-1}dx=-2\mu\iab u_t\theta^{2^n}\left(\theta^{2^n}\right)_xdx\nonumber\\
&\leq C\|u_t\|_{\lab2}\|\left(\theta^{2^n}\right)_x\|_{\lab2}\|\theta^{2^n}\|_{\lab\infty}.
\end{align}
The Gagliardo-Nirenberg inequality yields
\begin{align*}
\|\theta^{2^n}\|_{\lab\infty}\leq C_1\|\left(\theta^{2^n}\right)_x\|_{\lab2}^{\frac23}\|\theta^{2^n}\|_{\lab1}^{\frac13}+C_2\|\theta^{2^n}\|_{\lab1}.
\end{align*}
Plugging in the above inequality into \eqref{nin1} we get
\begin{align*}
\mu\iab u_{tx}\theta^{2^{n+1}}dx&\leq 
C\left(\|\left(\theta^{2^n}\right)_x\|_{\lab2}^{\frac53}\|\theta^{2^n}\|_{\lab1}^{\frac13}+\|\left(\theta^{2^n}\right)_x\|_{\lab2}\|\theta^{2^n}\|_{\lab1}\right)\\
&\leq \epsilon \|\left(\theta^{2^n}\right)_x\|_{\lab2}^2+C(\epsilon^{-1}+\epsilon^{-5})\|\theta^{2^n}\|_{\lab1}^2,
\end{align*}
where $\epsilon$ is a positive number and will be defined soon.
Using \eqref{nin2}, we obtain
\begin{align*}
\frac1{2^{n+1}}\ddt\iab\theta^{2^{n+1}}dx+a_n\|\left(\theta^{2^n}\right)_x\|_{\lab2}^2\leq \epsilon 
\|\left(\theta^{2^n}\right)_x\|_{\lab2}^2+C(\epsilon^{-1}+\epsilon^{-5})\|\theta^{2^n}\|_{\lab1}^2,
\end{align*}
where we denoted $a_n:=\frac1{2^{n-1}}-\frac1{4^n}$. Let us take $\epsilon:=\frac{a_n}2$. Then, we have
\begin{align*}
\frac1{2^{n+1}}\ddt\iab\theta^{2^{n+1}}dx+\frac{a_n}2\|\left(\theta^{2^n}\right)_x\|_{\lab2}^2\leq C(a_n^{-1}+a_n^{-5})\|\theta^{2^n}\|_{\lab1}^2.
\end{align*}
We again use the Gagliardo-Nirenberg inequality and the Young inequality. We obtain
\begin{align*}
\frac1{2^{n+1}}\ddt\iab\theta^{2^{n+1}}dx+\frac{a_n}2\|\theta^{2^n}\|_{\lab2}^2\leq C(a_n^{-1}+a_n^{-5}+1)\|\theta^{2^n}\|_{\lab1}^2.
\end{align*}
We rewrite the above inequality as follows
\begin{align*}
\ddt\|\theta\|_{\lab{2^{n+1}}}^{2^{n+1}}+\|\theta\|_{\lab{2^{n+1}}}^{2^{n+1}}\leq C_1\|\theta\|_{\lab{2^n}}^{2^{n+1}},
\end{align*}
where $C(a_n^{-1}+a_n^{-5}+1)2^{n+1}\leq C 2^{6n}=C_1$. Consequently,
\begin{align*}
\|\theta\|_{\lab{2^{n+1}}}^{2^{n+1}}\leq (b-a)\|\theta_0\|_{\lab\infty}^{2^{n+1}}+C 
2^{6n}\|\theta\|_{L^{\infty}\left(0,\infty;\lab{2^n}\right)}^{2^{n+1}},
\end{align*}
which gives
\begin{align}\label{jeden}
\|\theta\|_{L^{\infty}\left(0,\infty;\lab{2^{n+1}}\right)}\leq\left(\tilde{c}\|\theta_0\|_{\lab\infty}^{2^{n+1}}+C 
2^{6n}\|\theta\|^{2^{n+1}}_{L^{\infty}\left(0,\infty;\lab{2^n}\right)}\right)^{\frac1{2^{n+1}}},
\end{align}
where $\tilde{c}:=\max\{1,b-a\}$.
Let us introduce a notation
\[
m_n:=\max\{\|\theta_0\|_{\lab{\infty}},\|\theta\|_{L^{\infty}\left(0,\infty;\lab{2^n}\right)}\}.
\]
Then, by \eqref{jeden}, we have
\begin{align*}
m_{n+1}\leq m_n(\tilde{c}+C2^{6n})^{\frac1{2^{n+1}}}\leq m_n C^{\frac1{2^{n+1}}}2^{\frac{6n}{2^{n+1}}}.
\end{align*}
Thus,
\begin{align*}
m_n\leq m_1 \prod_{k=2}^{n}C^{\frac1{2^k}}2^{\frac{6(k-1)}{2^k}}=m_1C^{\sum_{k=2}^n\frac1{2^k}}2^{\sum_{k=1}^n\frac{6(k-1)}{2^k}}\leq m_1 
C^{\sum_{k=2}^{\infty}\frac1{2^k}}2^{\sum_{k=1}^{\infty}\frac{6(k-1)}{2^k}}.
\end{align*}
Both series in exponents in the above inequality are convergent, so the sequence $m_n$ is bounded. Hence, we have
\begin{align*}
\|\theta\|_{L^{\infty}\left(0,\infty,\lab{2^n}\right)}\leq C
\end{align*}
for all $n\in\n$, and the claim is shown.
\end{proof}

The above theorem and Proposition \ref{ogrzpop} give us immediately the following result, crucial in the compactness estimates.
\begin{cor}\label{oszcor}
Let $\theta$ be a positive weak solution of \eqref{eq} in the sense of Definition \ref{apdef}. Then,
\begin{align*}
\int_0^{\infty}\iab\frac{\theta^2_x}{\theta}dxdt<\infty.
\end{align*}
\end{cor}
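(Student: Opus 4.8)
The plan is to recognize the integrand $\theta_x^2/\theta$ as the product of $\theta$ with the squared gradient of the entropy $\tau=\log\theta$, and then to combine the uniform bound of $\theta$ furnished by Theorem \ref{oszth} with the space--time integrability of $\tau_x$ already recorded in \eqref{taul2} of Proposition \ref{ogrzpop}. The whole point is that both ingredients are time-independent, so their combination survives integration over the unbounded time interval.

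Concretely, since $\tau=\log\theta$ gives $\tau_x=\theta_x/\theta$, I would first rewrite the integrand as
$$
\frac{\theta_x^2}{\theta}=\theta\,\tau_x^2 .
$$
Theorem \ref{oszth} provides a finite, time-independent constant $M:=\|\theta\|_{L^\infty((0,\infty)\times(a,b))}$, so that $\theta\leq M$ and hence $\frac{\theta_x^2}{\theta}\leq M\,\tau_x^2$ almost everywhere on $(0,\infty)\times(a,b)$. Integrating this pointwise inequality over the full space--time domain and invoking \eqref{taul2}, which asserts $\tau_x\in L^2((0,\infty)\times(a,b))$, yields
$$
\int_0^{\infty}\iab\frac{\theta_x^2}{\theta}\,dx\,dt\leq M\int_0^{\infty}\iab\tau_x^2\,dx\,dt<\infty ,
$$
which is exactly the claim.

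There is essentially no obstacle here: the corollary is an immediate algebraic consequence of the two cited results, and the only point that deserves a moment's attention is that the constant $M$ is genuinely independent of $t$. This is precisely the strength of the Alikakos--Moser iteration carried out in Theorem \ref{oszth}, and it is what permits $M$ to be pulled outside the integral over the infinite time horizon; without the time-independence of the $L^\infty$ bound the argument would collapse, since a bound growing in $t$ could not be reconciled with the finiteness of $\int_0^\infty\!\!\int_a^b\tau_x^2$.
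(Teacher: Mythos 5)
Your argument is correct and coincides with the paper's own proof: both write $\theta_x^2/\theta=\theta\,\tau_x^2$, bound $\theta$ by its $L^\infty((0,\infty)\times(a,b))$ norm from Theorem \ref{oszth}, and invoke \eqref{taul2}. Nothing to add.
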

\begin{proof}
We have
\begin{align*}
\int_0^{\infty}\iab\frac{\theta_x^2}{\theta}dxdt=\int_0^{\infty}\iab\theta\frac{\theta_x^2}{\theta^2}dxdt\leq\|\theta\|_{L^{\infty}((0,\infty)\times(a,b))}\int_0^{\infty}\iab\tau_x^2dxdt.
\end{align*}
We conclude using Theorem \ref{oszth} and \eqref{taul2}.
\end{proof}

By Theorem \ref{main_theorem}, we know that $\theta$ is positive. To be more precise, in \cite{BC}, we have shown that for each $T>0$, there exists 
$m_T>0$ such that $\theta \geq m_T$ on $(0,T)$. The following theorem shows that actually, the stronger claim holds, $m$ can be chosen independent of 
time, i.e., the temperature is bounded away from $0$.

\begin{tw}\label{Twoddolu}
Let $\theta$ be a positive temperature solving \eqref{eq} in the sense of Definition \ref{apdef}. Then,
\begin{align*}
\tau:=\log\theta\in L^{\infty}((0,\infty)\times(a,b)).
\end{align*}
\end{tw}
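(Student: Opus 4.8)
The plan is to reduce the statement to a \emph{uniform-in-time lower bound} on the temperature and to obtain it by a Moser iteration performed on the negative part of $\tau=\log\theta$. Since Theorem \ref{oszth} already gives $\theta\in L^\infty((0,\infty)\times(a,b))$, the positive part $\tau_+=(\log\theta)_+$ is bounded by $\log\|\theta\|_{\lab\infty}$, so it remains only to control $w:=\tau_-=\max\{0,-\tau\}$, that is, to bound $\theta$ away from $0$ uniformly in time. As in the previous proofs I would work on the smooth, strictly positive approximate solutions $(u_n,\theta_n)$ of Proposition \ref{aproxex} (so that $\tau_n=\log\theta_n$ is smooth and the positive-part manipulations are justified), keeping all constants independent of $n$ and of $t$.

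Writing $v:=-\tau$, equation \eqref{taurownanie} becomes
\[
v_t-v_{xx}+v_x^2=-\mu u_{tx},
\]
and $w=v_+$. The key structural observation is that, after multiplying by $v_+^{2^{n+1}-1}$ and integrating over $(a,b)$ (boundary terms vanish because $v_x=-\theta_x/\theta=0$ and $u_t=0$ at $a,b$), the quadratic term contributes $\int_a^b v_+^{2^{n+1}-1}(v_+)_x^2\,dx\ge0$ with a \emph{favorable} sign, so it may be kept on the coercive side or simply discarded. One is then left, exactly as in Theorem \ref{oszth}, with
\[
\frac1{2^{n+1}}\ddt\iab v_+^{2^{n+1}}dx+(2^{n+1}-1)\iab v_+^{2^{n+1}-2}(v_+)_x^2\,dx\le-\mu\iab u_{tx}v_+^{2^{n+1}-1}\,dx,
\]
whose left-hand side controls $\|(v_+^{2^n})_x\|_{\lab2}^2$.

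For the right-hand side I would integrate by parts once more and use the time-independent bound on $\|u_t\|_{\lab2}$ coming from \eqref{balance}, arriving at an estimate of the form $C2^{n}\|u_t\|_{\lab2}\,\|(v_+^{2^n})_x\|_{\lab2}\,\|v_+\|_{\lab\infty}^{2^n-1}$; the $\lab\infty$ factor is then treated by the Gagliardo--Nirenberg inequality applied to $v_+^{2^n}$ and by Young's inequality, absorbing the top-order gradient $\|(v_+^{2^n})_x\|_{\lab2}^2$ into the left-hand side. After a further use of Gagliardo--Nirenberg to produce the zero-order term this yields the self-improving differential inequality
\[
\ddt\|v_+\|_{\lab{2^{n+1}}}^{2^{n+1}}+\|v_+\|_{\lab{2^{n+1}}}^{2^{n+1}}\le C\,2^{6n}\,\|v_+\|_{\lab{2^n}}^{2^{n+1}},
\]
which is precisely the analogue of the inequality driving Theorem \ref{oszth}. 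The base of the iteration is for free: \eqref{tauosz} gives $\sup_t\int_a^b v_+\,dx\le\sup_t\int_a^b|\tau|\,dx<C$, i.e.\ a uniform $L^1$ (and, after one preliminary step, $L^2$) bound. Running the same recursion on $m_n:=\max\{\|w(0)\|_{\lab\infty},\|v_+\|_{L^{\infty}(0,\infty;\lab{2^n})}\}$ as in Theorem \ref{oszth}, the product $\prod_k C^{1/2^{k}}2^{6(k-1)/2^{k}}$ converges, so $\|v_+\|_{L^{\infty}(0,\infty;\lab{2^n})}$ is bounded uniformly in $n$ and $v_+=\tau_-\in L^\infty$. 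Combined with the upper bound from Theorem \ref{oszth} this gives $\tau\in L^\infty((0,\infty)\times(a,b))$.

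I expect the main obstacle to be the right-hand side coupling term: one has to split $u_{tx}v_+^{2^{n+1}-1}$ so that simultaneously (i) the top-order gradient $\|(v_+^{2^n})_x\|_{\lab2}^2$ is recovered and absorbed, and (ii) the remainder is a power of the strictly lower norm $\|v_+\|_{\lab{2^n}}$, all while the step-$n$ constant grows only geometrically (like $2^{6n}$) so that the infinite product converges. The mild new difficulty compared with the upper bound is that the log-transformed source $-\mu u_{tx}$ carries no temperature weight, so the cubic term splits slightly less cleanly than in Theorem \ref{oszth} (the $\lab\infty$ factor is $\|v_+\|_{\lab\infty}^{2^n-1}$ rather than $\|v_+^{2^n}\|_{\lab\infty}$); since the exponent $(2^n-1)/2^n<1$ this costs nothing and the Gagliardo--Nirenberg/Young bookkeeping closes as above.
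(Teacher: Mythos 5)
Your proposal is correct and follows essentially the same route as the paper: reduce to bounding $\tau^-$ via Theorem \ref{oszth}, multiply \eqref{taurownanie} by $-(\tau^-)^{2^{n+1}-1}$, discard the sign-favorable quadratic term, integrate the source by parts against $u_t$, close with Gagliardo--Nirenberg and Young to get the recursion $\ddt\|\tau^-\|_{\lab{2^{n+1}}}^{2^{n+1}}+\|\tau^-\|_{\lab{2^{n+1}}}^{2^{n+1}}\le C2^{6n}(\|\tau^-\|_{\lab{2^n}}^{2^{n+1}}+1)$, and start the iteration from the $L^1$ bound \eqref{tauosz} upgraded to $L^2$ in a preliminary step. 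Even the detail you flag as the ``mild new difficulty'' --- the $\lab\infty$ factor appearing with exponent $1-2^{-n}$ --- is handled in the paper exactly as you suggest, by bounding it by $1+\|(\tau^-)^{2^n}\|_{\lab\infty}$.
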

\begin{proof}
Theorem \ref{oszth} gives
\begin{align*}
\tau\leq \log\|\theta\|_{L^{\infty}((0,\infty)\times(a,b))}
\end{align*}
for almost all $(t,x)\in(0,\infty)\times(a,b)$. Thus, it is sufficient to show that
\begin{align*}
\tau^-\in L^{\infty}((0,\infty)\times(a,b)),
\end{align*}
where $\tau^-:=\max\{0,-\tau\}$.

We take the first step and estimate the $L^2$ norm of $\tau^-$. We need to do it since we shall start the iteration from the $L^2$ norm.

Multiplying \eqref{taurownanie} by $-\tau^-$ and integrating over $[a,b]$, we arrive at
\begin{align*}
\frac12\ddt\iab(\tau^-)^2dx+\iab[(\tau^-)_x]^2dx+\iab\tau_x^2\tau^-dx=-\mu\iab u_{tx}\tau^-dx.
\end{align*}
This gives us immediately that
\begin{align}\label{login1}
\frac12\ddt\|\tau^-\|_{\lab{2}}^2+\|(\tau^-)_x\|_{\lab2}^2\leq-\mu\iab u_{tx}\tau^-dx.
\end{align}
We shall bound the right-hand side of the above inequality.
\begin{align*}
-\mu\iab u_{tx}\tau^-dx=\mu\iab u_t(\tau^-)_xdx\leq|\mu|\|u_t\|_{\lab2}\|(\tau^-)_x\|_{\lab2}
\end{align*}
We put this result into \eqref{login1} and use the Young inequality. This leads to
\begin{align}\label{login2}
\ddt\|\tau^-\|_{\lab{2}}^2+\|(\tau^-)_x\|_{\lab2}^2\leq C.
\end{align}

From \eqref{tauosz} we know that $\|\tau^-\|_{\lab1}$ is bounded for almost all $t\in [0,\infty)$. By the Gagliardo-Nirenberg inequality, we have
\begin{align*}
\|\tau^-\|_{\lab2}\leq  &C_1\|(\tau^-)_x\|_{\lab2}^{\frac13}\|\tau^-\|_{\lab1}^{\frac23}+C_2\|\tau^-\|_{\lab1}\\
\leq & C_1\|(\tau^-)_x\|_{\lab2}+C_2\|\tau^-\|_{\lab1},
\end{align*}
where in the last step, we used the Young inequality. Now, plugging the last inequality into \eqref{login2}, we obtain
\begin{align*}
\ddt\|\tau^-\|_{\lab{2}}^2+C_1\|\tau^-\|_{\lab2}^2\leq C_2.
\end{align*}
Thus, we get that
\begin{align*}
\tau^-\in L^{\infty}(0,\infty; \lab2)
\end{align*}

Next, Let us take $n\in\n$. We multiply equation \eqref{taurownanie} by $-(\tau^-)^{2^{n+1}-1}$ and then integrate over $[a,b]$. It yields
\begin{align}\label{login4}
\frac{1}{2^{n+1}}\ddt\iab (\tau^-)^{2^{n+1}}dx+\left(\frac1{2^{n-1}}-\frac1{4^{n}}\right)\iab\left[\left((\tau^-)^{2^n}\right)_x\right]^2\leq 
-\mu\iab(\tau^-)^{2^{n+1}-1}u_{tx}dx.
\end{align}
We calculate the right-hand side of the above inequality.
\begin{align}
-\mu\iab(\tau^-)^{2^{n+1}-1}u_{tx}dx&=\mu(2^{n+1}-1)\iab(\tau^-)_x(\tau^-)^{2^{n+1}-2}u_tdx\nonumber\\
&=\mu\left(2-\frac1{2^n}\right)\iab[(\tau^-)^{2^n}]_x(\tau^-)^{2^{n}-1}u_tdx\nonumber\\
&\leq |\mu|\left(2-\frac1{2^n}\right)\|u_t\|_{\lab2}\|[(\tau^-)^{2^n}]_x\|_{\lab2}\|(\tau^-)^{2^n}\|_{\lab{\infty}}^{1-\frac1{2^n}}\nonumber\\
&\leq|\mu|2\|u_t\|_{\lab2}\|[(\tau^-)^{2^n}]_x\|_{\lab2}(1+\|(\tau^-)^{2^n}\|_{\lab{\infty}})\label{login3}
\end{align}
Now, we use the Gagliardo-Nirenberg inequality
\begin{align*}
\|(\tau^-)^{2^n}\|_{\lab{\infty}}\leq C_1\|[(\tau^-)^{2^n}]_x\|_{\lab2}^{\frac23}\|(\tau^-)^{2^n}\|_{\lab1}^{\frac13}+C_2\|(\tau^-)^{2^n}\|_{\lab1}.
\end{align*}
Plugging the obtained inequality into \eqref{login3}, we arrive at
\begin{align*}
-\mu\iab&(\tau^-)^{2^{n+1}-1}u_{tx}dx\\
&\leq C(\|[(\tau^-)^{2^n}]_x\|_{\lab2}+\|[(\tau^-)^{2^n}]_x\|_{\lab2}^{\frac53}\|(\tau^-)^{2^n}\|_{\lab1}^{\frac13}\\
&+\|[(\tau^-)^{2^n}]_x\|_{\lab2}\|(\tau^-)^{2^n}\|_{\lab1})\\
&\leq \epsilon \|[(\tau^-)^{2^n}]_x\|_{\lab2}^2+C\epsilon^{-1}+C(\epsilon^{-1}+\epsilon^{-5})\|(\tau^-)^{2^n}\|_{\lab1}^2.
\end{align*}

We plug this into \eqref{login4} and obtain
\begin{align*}
\frac{1}{2^{n+1}}\ddt&\iab (\tau^-)^{2^{n+1}}dx+\left(\frac1{2^{n-1}}-\frac1{4^{n}}\right)\iab\left[\left((\tau^-)^{2^n}\right)_x\right]^2\\
&\leq \epsilon \|[(\tau^-)^{2^n}]_x\|_{\lab2}^2+C\epsilon^{-1}+C(\epsilon^{-1}+\epsilon^{-5})\|(\tau^-)^{2^n}\|_{\lab1}^2.
\end{align*}
We denote $a_n:=\frac1{2^{n-1}}-\frac1{4^{n}}$ and we take $\epsilon:=\frac{a_n}2$. Thus, we get
\begin{align*}
\frac{1}{2^{n+1}}\ddt&\iab (\tau^-)^{2^{n+1}}dx+\frac{a_n}2\iab\left[\left((\tau^-)^{2^n}\right)_x\right]^2\\
&\leq C a_n^{-1}+C(a_n^{-1}+a_n^{-5})\|(\tau^-)^{2^n}\|_{\lab1}^2.
\end{align*}

Now, we again use the Gagliardo-Nirenberg inequality and the Young inequality. It yields
\begin{align*}
\frac{1}{2^{n+1}}\ddt&\iab (\tau^-)^{2^{n+1}}dx+\frac{a_n}2\|(\tau^-)^{2n}\|_{\lab2}^2\\
&\leq C a_n^{-1}+C(a_n^{-1}+a_n^{-5}+1)\|(\tau^-)^{2^n}\|_{\lab1}^2.
\end{align*}
We rewrite the above inequality in the following way
\begin{align*}
\ddt&\|\tau^-\|_{\lab{2^{n+1}}}^{2^{n+1}}+\|\tau^-\|_{\lab{2^{n+1}}}^{2^{n+1}}\leq C2^{6n}(\|\tau^-\|_{\lab{2^n}}^{2^{n+1}}+1).
\end{align*}
It leads us to
\begin{align*}
\|\tau^-\|_{L^{\infty}\left(0,\infty;\lab{2^{n+1}}\right)}^{2^{n+1}}\leq 
C2^{6n}(\|\tau^-\|_{L^{\infty}\left(0,\infty;\lab{2^n}\right)}^{2^{n+1}}+1)+(b-a)\|\tau_0^-\|_{\lab{\infty}}^{2^{n+1}}
\end{align*}
for all $n\in\n$ and for almost all $t\in [0,\infty)$, where $\tau_0:=\log\theta_0$. Now, we immediately obtain
\begin{align}\label{login5}
\|\tau^-\|_{L^{\infty}\left(0,\infty;\lab{2^{n+1}}\right)}\leq ( 
C2^{6n}(\|\tau^-\|_{L^{\infty}\left(0,\infty;\lab{2^n}\right)}^{2^{n+1}}+1)+\tilde{c}\|\tau_0^-\|_{\lab{\infty}}^{2^{n+1}})^{\frac1{2^{n+1}}},
\end{align}
where $\tilde{c}=\max\{1,b-a\}$. We can also assume that $C\geq 1$. Let us introduce a sequence 
$$m_n:=\max\{\|\tau^-\|_{L^{\infty}\left(0,\infty;\lab{2^n}\right)},\|\tau_0^-\|_{\lab{\infty}},1\}.$$ Inequality \eqref{login5} gives us
\begin{align*}
m_{n+1}\leq ( C2^{6n}+\tilde c)^{\frac{1}{2^{n+1}}} m_n\leq C^{\frac1{2^{n+1}}}2^{\frac{6n}{2^{n+1}}}m_n.
\end{align*}
It is an analogous inequality to the one appearing in the proof of Theorem \ref{oszth}. Thus, similarly, we obtain
\begin{align*}
\tau^-\in L^{\infty}((0,\infty)\times (a,b)).
\end{align*}
\end{proof}

The above theorem gives that for almost all $(t,x)\in (0,\infty)\times (a,b)$  we have
\begin{align*}
-\|\tau\|_{L^{\infty}((0,\infty)\times (a,b))}\leq\log\theta\leq\|\tau\|_{L^{\infty}((0,\infty)\times (a,b))}.
\end{align*}
It yields that for almost all $(t,x)\in (0,\infty)\times (a,b)$  one has
\begin{align*}
0<\exp\left(-\|\tau\|_{L^{\infty}((0,\infty)\times (a,b))}\right)\leq \theta\leq\|\theta\|_{L^{\infty}((0,\infty)\times (a,b))}.
\end{align*}

\section{Time-independent estimates for higher order derivatives of the displacement and the temperature}\label{compactness}

In order to pass to the limit over the time sequences, we need compactness estimates. In what follows, we present the time-independent estimates of 
higher-order derivatives of displacement and temperature, guaranteeing required compactness.

We derive the estimates on the approximation sequence from Proposition \ref{aproxex}. Next, the passage to the limit shall yield the exactly same 
estimates of the solutions to \eqref{eq}.
\begin{tw}\label{zwartosc}
Let $(u_n,\theta_n)$ be the solution of the approximate problem in the sense of Definition \ref{apdef} for any $n\in\n$. Then, for any $t>0$ and all 
$n\in\n$ we have
\begin{align}\label{appep}
\|\theta_{n,x}\|_{\lab2}^2+\|u_{n,tx}\|_{\lab2}^2+\|u_{n,xx}\|^2_{\lab2}\leq C.
\end{align}
The constant $C$ depends on $b-a,\mu,\|\theta_0\|_{\hab},\tilde\theta,\|u_0\|_{H^2(a,b)}$ and  $\|v_0\|_{\habo}$.
\end{tw}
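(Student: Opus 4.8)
The plan is to run a Grönwall argument on the Fisher-information functional of Lemma \ref{estlem}, using the entropy-gradient bound of Corollary \ref{oszcor} to supply an integrable-in-time coefficient. Writing $\theta=\theta_n$, $u=u_n$ for brevity and setting
$$
E(t):=\iab\frac{\theta_x^2}{\theta}dx+\iab u_{tx}^2dx+\iab u_{xx}^2dx,
$$
I would first record that the computation behind Lemma \ref{estlem}, carried out on the Galerkin system as in \cite{BC}, gives
$$
\frac12\ddt E(t)=-\iab\theta\left[(\log\theta)_{xx}\right]^2dx+\frac{\mu}{2}\iab\frac{\theta_x^2}{\theta}u_{tx}dx.
$$
Introducing $w:=\theta^{\ul}$, so that $\frac{\theta_x^2}{\theta}=4w_x^2$ pointwise, Lemma \ref{cfhslem} converts the (negative) dissipation into control of $\|w_{xx}\|_{\lab2}^2$, namely $\iab\theta[(\log\theta)_{xx}]^2dx\geq\frac{8}{13}\|w_{xx}\|_{\lab2}^2$. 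The whole game is then to dominate the coupling term by this dissipation plus a harmless, time-integrable remainder.

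The key step is the estimate of the coupling term. By Cauchy--Schwarz,
$$
\left|\frac{\mu}{2}\iab\frac{\theta_x^2}{\theta}u_{tx}dx\right|\leq 2|\mu|\,\|u_{tx}\|_{\lab2}\,\|w_x\|_{\lab4}^2,
$$
and the crucial choice is to interpolate $\|w_x\|_{\lab4}$ not between $w$ and $w_{xx}$ but between $w_x$ and $w_{xx}$: the one-dimensional Gagliardo--Nirenberg inequality yields $\|w_x\|_{\lab4}^2\leq C\|w_{xx}\|_{\lab2}^{1/2}\|w_x\|_{\lab2}^{3/2}+C\|w_x\|_{\lab2}^2$. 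Feeding this in and applying Young's inequality to the factor $\|w_{xx}\|_{\lab2}^{1/2}$ (whose exponent $1/2<2$ is exactly what makes absorption possible) produces
$$
\left|\frac{\mu}{2}\iab\frac{\theta_x^2}{\theta}u_{tx}dx\right|\leq\epsilon\|w_{xx}\|_{\lab2}^2+C\left(1+\|u_{tx}\|_{\lab2}^2\right)\|w_x\|_{\lab2}^2.
$$
Choosing $\epsilon$ below $8/13$, the $\|w_{xx}\|_{\lab2}^2$ term is swallowed by the dissipation via Lemma \ref{cfhslem}. Noting $\|w_x\|_{\lab2}^2=\tfrac14\iab\frac{\theta_x^2}{\theta}dx=:\tfrac14\,g(t)$ and $1+\|u_{tx}\|_{\lab2}^2\leq 1+E(t)$, I arrive at the differential inequality $E'(t)\leq C\,g(t)\,(1+E(t))$.

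This is the tricky Grönwall inequality: its coefficient $g(t)=\iab\frac{\theta_x^2}{\theta}dx$ is time-integrable by Corollary \ref{oszcor}, with $\int_0^\infty g\,dt$ bounded uniformly in $n$. Dividing by $1+E(t)$ and integrating gives
$$
1+E(t)\leq\left(1+E(0)\right)\exp\left(C\int_0^\infty g(s)\,ds\right),
$$
a bound independent of $t$ and of $n$. Converting back, $\|\theta_x\|_{\lab2}^2\leq\|\theta\|_{L^\infty((0,\infty)\times(a,b))}\iab\frac{\theta_x^2}{\theta}dx\leq C\,E(t)$ by Theorem \ref{oszth}, which together with the $u_{tx}$ and $u_{xx}$ terms of $E$ gives \eqref{appep}; and $E(0)$ is controlled by the stated data norms, since $\iab\frac{\theta_{0,x}^2}{\theta_0}dx\leq\tilde\theta^{-1}\|\theta_0\|_{\hab}^2$ (using $\theta_0\geq\tilde\theta$), while $\|u_{n,tx}(0)\|_{\lab2}$ and $\|u_{n,xx}(0)\|_{\lab2}$ are bounded by $\|v_0\|_{\habo}$ and $\|u_0\|_{H^2(a,b)}$ through the projections $P_{V_n}$. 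I expect two obstacles. The first is justifying the differential identity at the Galerkin level (the coupling term already appears in \cite{BC}, so only mild care is needed). The second, genuinely delicate point, is choosing the interpolation so that the factor multiplying $E(t)$ is exactly the time-integrable Fisher dissipation: a naive interpolation through $\|w\|_{\lab2}$ yields the power $\|w_{xx}\|_{\lab2}^{5/4}$ and, after Young, a superlinear, non-integrable Grönwall coefficient such as $\|u_{tx}\|_{\lab2}^{8/3}$, which would break the argument.
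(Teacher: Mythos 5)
Your proposal is correct and follows essentially the same route as the paper: the identity of Lemma \ref{estlem} at the Galerkin level, absorption of the second-order term through Lemma \ref{cfhslem}, and a Gr\"{o}nwall argument whose coefficient $\iab\theta_x^2/\theta\,dx$ is time-integrable by Corollary \ref{oszcor}. The only deviation is in the coupling term, where the paper integrates by parts once more and controls $\|u_{n,t}\|_{\lab\infty}$ by Gagliardo--Nirenberg, while you apply H\"{o}lder with $\|(\theta_n^{\ul})_x\|_{\lab4}^2$ and interpolate between $\|(\theta_n^{\ul})_x\|_{\lab2}$ and $\|(\theta_n^{\ul})_{xx}\|_{\lab2}$ --- both give the bound $\epsilon\|(\theta_n^{\ul})_{xx}\|_{\lab2}^2+C\left(1+\|u_{n,tx}\|_{\lab2}^2\right)\|(\theta_n^{\ul})_x\|_{\lab2}^2$ and hence the same Gr\"{o}nwall structure.
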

\begin{proof}
For simplicity, we skip the subscripts in $u_n$ and $\theta_n$ in the proof. By Proposition \ref{aproxex}, we know that $u$ and $\theta$ are regular 
enough to apply Lemma \ref{estlem}. Hence, we obtain
\begin{align}\label{ballep}
\frac12\ddt\left(\int_a^b\frac{\theta_x^2}{\theta}dx+\int_a^bu_{tx}^2dx+\int_a^bu_{xx}^2dx\right)=-\int_a^b\theta\left[(\log\theta)_{xx}\right]^2dx+\frac{\mu}2\int_a^b\frac{\theta^2_x}{\theta}u_{tx}dx.
\end{align}
We estimate the second term on the right-hand side of \eqref{ballep}.
\begin{align}\label{nier1}
\nonumber\iab\frac{\theta^2_x}{\theta}u_{tx}dx&=4\iab\left(\theta^{\ul}\right)^2_xu_{tx}dx=-8\iab\left(\theta^{\ul}\right)_x\left(\theta^{\ul}\right)_{xx}u_tdx\\
\nonumber&\leq 8\|u_t\|_{\lab{\infty}}\left\|\left(\theta^{\ul}\right)_{xx}\right\|_{\lab2}\left\|\left(\theta^{\ul}\right)_{x}\right\|_{\lab2}\\
&\leq 
\epsilon\left\|\left(\theta^{\ul}\right)_{xx}\right\|^2_{\lab2}+\frac{1}{16\epsilon}\left\|\left(\theta^{\ul}\right)_{x}\right\|^2_{\lab2}\|u_t\|_{\lab{\infty}}^2.
\end{align}
We apply the Gagliardo-Nirenberg inequality to the norm $\|u_t\|_{\lab{\infty}}$ and obtain
\begin{align*}
\|u_t\|_{\lab{\infty}}\leq C\|u_{tx}\|_{\lab2}^{\ul}\|u_t\|_{\lab2}^{\ul}.
\end{align*}

Taking this into account in \eqref{nier1}, together with Lemma \ref{cfhslem}, we obtain
\begin{align}\label{dwa}
\iab\frac{\theta^2_x}{\theta}u_{tx}dx&\leq 
\epsilon\frac{13}{8}\iab\theta\left[(\log\theta)_{xx}\right]^2dx+C\left\|\left(\theta^{\ul}\right)_{x}\right\|^2_{\lab2}\|u_{tx}\|_{\lab2}\|u_t\|_{\lab2}\nonumber\\
&\leq 
\epsilon\frac{13}{8}\iab\theta\left[(\log\theta)_{xx}\right]^2dx+C\left\|\left(\theta^{\ul}\right)_{x}\right\|^2_{\lab2}\|u_{tx}\|_{\lab2}\nonumber\\
&\leq 
\epsilon\frac{13}{8}\iab\theta\left[(\log\theta)_{xx}\right]^2dx+C_1\left\|\left(\theta^{\ul}\right)_{x}\right\|^2_{\lab2}+C_2\left\|\left(\theta^{\ul}\right)_{x}\right\|^2_{\lab2}\|u_{tx}\|_{\lab2}^2,
\end{align}
where constant $C$ contains $\|u_t\|_{L^2(a,b)}$. The last inequality might seem strange at the first sight. It is, however, crucial in order to derive 
the proper tricky Gr\"{o}nwall-type lemma yielding a time-independent estimate.

We plug in \eqref{dwa} in \eqref{ballep} and choose $\epsilon$ such that $\epsilon\frac{13}{8}-1\leq 0$, this gives
\begin{align}\label{trzy}
\frac12\ddt&\left(\int_a^b\frac{\theta_x^2}{\theta}dx+\int_a^bu_{tx}^2dx+\int_a^bu_{xx}^2dx\right)\nonumber\\ &\leq 
C_1\left\|\left(\theta^{\ul}\right)_{x}\right\|^2_{\lab2}+C_2\left\|\left(\theta^{\ul}\right)_{x}\right\|^2_{\lab2}\|u_{tx}\|_{\lab2}^2.
\end{align}

Introducing the notation $x(t):=\int_a^b\frac{\theta_x^2}{\theta}dx$  and $y(t):=\int_a^bu_{tx}^2dx+\int_a^bu_{xx}^2dx$, we rewrite \eqref{trzy} in the 
following form
\begin{align*}
x'(t)+y'(t)\leq C_1x(t)+C_2x(t)y(t)\leq C_1x(t)+C_2\left(x(t)y(t)+x(t)^2\right).
\end{align*}
Multiplying both sides by $\exp\left(-C_2\int_0^tx(s)ds\right)$ yields
\begin{align*}
\ddt\left(\exp\left(-C_2\int_0^tx(s)ds\right)\left(x(t)+y(t)\right)\right)\leq C_1\exp\left(-C_2\int_0^tx(s)ds\right) x(t).
\end{align*}
We integrate over $[0,t]$ for arbitrary $t$ and get
\begin{align*}
\exp\left(-C_2\int_0^tx(s)ds\right)&\left(x(t)+y(t)\right) \\
&\leq C_1\int_0^t\exp\left(-C_2\int_0^sx(u)du\right) x(s)ds+x(0)+y(0),
\end{align*}
which immediately gives
\begin{align*}
x(t)+y(t)\leq \exp\left(C_2\int_0^tx(s)ds\right)\left(C_1\int_0^tx(s)ds+x(0)+y(0)\right)
\end{align*}
for all $t\in[0,\infty)$.

Notice that the estimate does not depend on time in light of Corollary \ref{oszcor}. Hence, we have
\begin{align*}
x(t)+y(t)\leq \exp\left(C_2\int_0^{\infty}x(s)ds\right)\left(C_1\int_0^{\infty}x(s)ds+x(0)+y(0)\right)
\end{align*}
for all $t\in[0,\infty)$.
Next, we notice that in light of Theorem \ref{oszth}, we additionally have
\begin{align*}
\|\theta_x\|_{\lab2}^2\leq\|\theta\|_{L^{\infty}((0,\infty)\times(a,b))}\iab\frac{\theta_x^2}{\theta}dx=\|\theta\|_{L^{\infty}((0,\infty)\times(a,b))}x(t)
\end{align*}
for all $t\in [0,\infty)$.
The proof is finished.
\end{proof}

In light of Theorem \ref{zwartosc}, in particular since $C$ in \eqref{appep} does not depend on $n$, we know that $(u_n,\theta_n)$ is weakly convergent 
(possibly on a subsequence) to $(u,\theta)$, which is a unique weak solution of \eqref{eq}. Hence, $(u,\theta)$ solving \eqref{eq}, satisfy
\begin{align}\label{beest}
\begin{split}
u&\in L^{\infty}(0,\infty;H^2(a,b))\cap W^{1,\infty}(0,\infty;\habo),\\
\theta&\in L^{\infty}(0,\infty;\hab).
\end{split}
\end{align}

\section{Displacement and temperature at infinity}

The present section is devoted to the proof of the main Theorem \ref{main_tw_gr}. We classify the asymptotic behavior of a heated string. Although the 
strong damping mechanisms like heat radiation and material damage due to heat or resistance of the air are not taken into account in our model, it still 
turns out that the heat propagation itself is a strong stabilizing effect, ensuring a string decay. It is a bit surprising since the natural behavior 
of a string is the oscillatory one. In our argument, we see that the second principle of thermodynamics is responsible for the vibration suppression.

Technically, our proof is based on the estimates in Sections \ref{iteration} and \ref{compactness}. However, we still need a stability of the solution 
map with respect to initial conditions. This is the content of Proposition \ref{ciagzallip}. Next, we define the dynamical system and its phase space 
corresponding to the evolution of solutions of \eqref{eq}. We are then in a position to identify the $\omega$-limit sets. One of the main tools in our 
argument is a quantitative version of the second principle of thermodynamics stated in Proposition \ref{ogrzpop}. A reader following closely our proof 
will notice the role of the second principle of thermodynamics in the identification of the asymptotic states of a heated string.

We start by stating the stability with respect to initial data in proper functional spaces.
\begin{prop}\label{ciagzallip}
Let us take two triples of the initial values $u_{0,1},u_{0,2}\in H^2(a,b)\cap\habo, v_{0,1},v_{0,2}\in\habo$ and $\theta_{0,1},\theta_{0,2}\in\hab$. 
Let us also assume that $(u_1,\theta_1)$ is a weak solution of \eqref{eq} starting from $u_{0,1},v_{0,1},\theta_{0,1}$ and $(u_2,\theta_2)$ is a weak 
solution starting from $u_{0,2},v_{0,2},\theta_{0,2}$. Moreover, we assume that there exists $M>0$ such that
 $$\|u_{1,tx}\|_{L^{\infty}\left(0,\infty;\lab2\right)}+\|\theta_2\|_{L^{\infty}\left(0,\infty;\hab\right)}\leq M\footnote{The inequality is satisfied 
 by solutions of \eqref{eq} due to \eqref{beest}.}.$$

Then, for all $t\in(0,\infty)$ there exists the constant $C>0$  such that
\begin{align*}
&\|u_1(t,\cdot)-u_2(t,\cdot)\|_{\habo}+\|u_{1,t}(t,\cdot)-u_{2,t}(t,\cdot)\|_{\lab2}+\|\theta_1(t,\cdot)-\theta_2(t,\cdot)\|_{\lab2}\\
&\quad\leq C\left(\|u_{0,1}-u_{0,2}\|_{\habo}+\|v_{0,1}-v_{0,2}\|_{\lab2}+\|\theta_{0,1}-\theta_{0,2}\|_{\lab2}\right)
\end{align*}
and $C=C(t,M,|\mu|)$.
\end{prop}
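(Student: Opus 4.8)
The plan is to run a weighted energy estimate on the difference of the two solutions and close it with Grönwall's inequality. Write $U := u_1 - u_2$ and $\Theta := \theta_1 - \theta_2$. Subtracting the momentum equations gives $U_{tt} - U_{xx} = \mu\Theta_x$, while subtracting the heat equations and splitting the nonlinearity as $\theta_1 u_{1,tx} - \theta_2 u_{2,tx} = \Theta u_{1,tx} + \theta_2 U_{tx}$ yields $\Theta_t - \Theta_{xx} = \mu(\Theta u_{1,tx} + \theta_2 U_{tx})$. The point of this particular splitting is that it produces exactly the factors $u_{1,tx}$ and $\theta_2$ that the hypothesis $\|u_{1,tx}\|_{L^{\infty}(0,\infty;\lab2)} + \|\theta_2\|_{L^{\infty}(0,\infty;\hab)}\le M$ controls. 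Note also that $U$ and $U_t$ vanish at $x=a,b$ (Dirichlet data for $u$), and $\Theta_x$ vanishes there (Neumann data for $\theta$), so all the boundary terms in the integrations by parts below will drop out.

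I would test the momentum difference with $U_t$ and the heat difference with $\Theta$, and add the two resulting identities. Setting
$$E(t) := \frac12\iab U_t^2\,dx + \frac12\iab U_x^2\,dx + \frac12\iab\Theta^2\,dx,$$
one obtains, after the two integrations by parts that use the boundary conditions above,
$$E'(t) + \iab\Theta_x^2\,dx = \mu\iab\Theta_x U_t\,dx + \mu\iab u_{1,tx}\Theta^2\,dx + \mu\iab\theta_2 U_{tx}\Theta\,dx.$$
The only genuinely delicate term is the last one, since it carries the mixed derivative $U_{tx}$ of the difference, which is \emph{not} controlled by $E$. Here I would integrate by parts in $x$, moving the derivative off $U_t$: using $U_t\in\habo$ and $\theta_2,\Theta\in\hab$ gives $\iab\theta_2 U_{tx}\Theta\,dx = -\iab\theta_{2,x}\Theta U_t\,dx - \iab\theta_2\Theta_x U_t\,dx$, thereby trading $U_{tx}$ for the controllable quantities $U_t$, $\theta_{2,x}$ and $\theta_2$.

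It then remains to absorb everything into $\iab\Theta_x^2\,dx$ and $E$. The coupling term $\mu\iab\Theta_x U_t$ and the term $-\mu\iab\theta_2\Theta_x U_t$ are linear in $\Theta_x$; since $\|\theta_2\|_{\lab\infty}\le C\|\theta_2\|_{\hab}\le CM$ by the one-dimensional Sobolev embedding, Young's inequality bounds them by $\epsilon\iab\Theta_x^2\,dx + C(\epsilon,M,\mu)\iab U_t^2\,dx$. For $\mu\iab u_{1,tx}\Theta^2$ I would use $\|u_{1,tx}\|_{\lab2}\le M$ together with the Gagliardo-Nirenberg inequality $\|\Theta\|_{\lab4}^2\le C\|\Theta_x\|_{\lab2}^{1/2}\|\Theta\|_{\lab2}^{3/2} + C\|\Theta\|_{\lab2}^2$ and Young to get $\epsilon\iab\Theta_x^2\,dx + C(\epsilon,M)\iab\Theta^2\,dx$; the remaining term $-\mu\iab\theta_{2,x}\Theta U_t$ is handled with the one-dimensional bound $\|\Theta\|_{\lab\infty}\le C\|\Theta\|_{\lab2}^{1/2}\|\Theta_x\|_{\lab2}^{1/2} + C\|\Theta\|_{\lab2}$ and $\|\theta_{2,x}\|_{\lab2}\le M$, again followed by Young. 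Choosing $\epsilon$ small enough that the accumulated coefficient of $\iab\Theta_x^2\,dx$ stays strictly below $1$, the dissipation absorbs all these contributions and one is left with $E'(t)\le C(M,\mu)E(t)$. Grönwall then gives $E(t)\le e^{C(M,\mu)t}E(0)$; since $E(0)$ is comparable to $\big(\|u_{0,1}-u_{0,2}\|_{\habo} + \|v_{0,1}-v_{0,2}\|_{\lab2} + \|\theta_{0,1}-\theta_{0,2}\|_{\lab2}\big)^2$ and $E(t)$ controls $\|U\|_{\habo}^2 + \|U_t\|_{\lab2}^2 + \|\Theta\|_{\lab2}^2$ (using Poincaré for the $\habo$ norm), taking square roots yields the claim with $C=C(t,M,|\mu|)$.

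I expect the main obstacle to be the rigorous justification that these formal energy manipulations are licit for weak solutions, and, relatedly, the handling of the $U_{tx}$ term; the regularity recorded in Remark \ref{rem1} and in \eqref{beest}, in particular $u\in W^{2,\infty}(0,\infty;\lab2)\cap W^{1,\infty}(0,\infty;\habo)$ and $\theta\in L^{\infty}(0,\infty;\hab)$ with $\theta_t,\theta_{xx}\in L^2_{\loc}(0,\infty;\lab2)$, is precisely what makes every integral above finite and every integration by parts admissible.
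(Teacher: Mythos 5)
Your proposal is correct and follows essentially the same route as the paper: the same splitting $\theta_1 u_{1,tx}-\theta_2 u_{2,tx}=\Theta u_{1,tx}+\theta_2 U_{tx}$, the same test functions $U_t$ and $\Theta$, the same integration by parts to trade $U_{tx}$ for $\theta_{2,x}$ and $\Theta_x$, absorption into the dissipation term, and Gr\"onwall. The only cosmetic difference is that you estimate the $\mu\iab u_{1,tx}\Theta^2\,dx$ term via an $L^4$ interpolation where the paper uses the $L^\infty$ embedding of $\hab$; these are equivalent in one dimension.
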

\begin{proof}
Let us denote $u:=u_1-u_2$ and $\theta:=\theta_1-\theta_2$. We subtract the equations for $(u_2,\theta_2)$ from the equations for $(u_1,\theta_1)$ and 
obtain
\begin{align}\label{uklpom}
\begin{cases}
u_{tt}-u_{xx}=\mu\theta_x,\\
\theta_t-\theta_{xx}=\mu\theta u_{1,tx}+\mu\theta_2u_{tx}.
\end{cases}
\end{align}
We multiply by $u_t$ the first of the above equations and arrive at
\begin{align}\label{glrow1}
\frac12\ddt\left(\iab u_t^2dx+\iab u_x^2dx \right)=\mu\iab u_t\theta_x\leq C\|u_t\|_{\lab2}^2+\frac14\|\theta_x\|^2_{\lab2}.
\end{align}

Whereas, multiplying the second equation in \eqref{uklpom} by $\theta$, we obtain
\begin{align}\label{glrow2}
\nonumber\frac12\ddt&\iab\theta^2 dx+\iab\theta^2_x dx=\mu\iab\theta^2u_{1,tx}dx+\mu\iab\theta\theta_2u_{tx}dx\\
&=\mu\iab\theta^2u_{1,tx}dx-\mu\iab\theta_x\theta_2u_{t}dx-\mu\iab\theta\theta_{2,x}u_{t}dx=I_1+I_2+I_3.
\end{align}
We estimate integrals $I_1,I_2$ and $I_3$.
\begin{align}\label{row21}
I_1&\leq|\mu|\|\theta\|_{\lab2}\|\theta\|_{\lab{\infty}}\|u_{1,tx}\|_{\lab2}\leq |\mu|M 
C\|\theta\|_{\lab2}\left(\|\theta\|_{\lab2}+\|\theta_x\|_{\lab2}\right)\nonumber\\
&\leq C\|\theta\|_{\lab2}^2+\frac1{12}\|\theta_x\|_{\lab2}^2,
\end{align}
\begin{align}\label{row22}
\nonumber I_2&\leq |\mu|\|\theta_x\|_{\lab2}\|\theta_2\|_{\lab{\infty}}\|u_t\|_{\lab2}\leq 
|\mu|C\|\theta_x\|_{\lab2}\|\theta_2\|_{\hab}\|u_t\|_{\lab2}\\
&\leq\frac1{12}\|\theta_x\|_{\lab2}^2+C\|u_t\|_{\lab2}^2,
\end{align}
and
\begin{align}\label{row23}
\nonumber I_3&\leq |\mu|\|\theta\|_{\lab{\infty}}\|\theta_{2,x}\|_{\lab2}\|u_t\|_{\lab2}\leq |\mu|M 
C\left(\|\theta\|_{\lab2}+\|\theta_x\|_{\lab2}\right)\|u_t\|_{\lab2}\\
&\leq\frac1{12}\|\theta_x\|^2_{\lab2}+C\left(\|\theta\|^2_{\lab2}+\|u_t\|^2_{\lab2}\right).
\end{align}
Next, inequalities \eqref{row21}, \eqref{row22} and \eqref{row23} are applied in \eqref{glrow2}. We have
\begin{align*}
\frac12\ddt&\iab\theta^2 dx+\iab\theta^2_x dx\leq\frac14\|\theta_x\|^2_{\lab2}+C\left(\|\theta\|^2_{\lab2}+\|u_t\|^2_{\lab2}\right).
\end{align*}

We add the above inequality and \eqref{glrow1}. It yields
\begin{align*}
\frac12\ddt&\left(\iab u_t^2dx+\iab u_x^2dx+\iab\theta^2 dx\right)+\iab\theta^2_x 
dx\leq\frac12\|\theta_x\|^2_{\lab2}+C\left(\|\theta\|^2_{\lab2}+\|u_t\|^2_{\lab2}\right).
\end{align*}
It immediately implies
\begin{align*}
\ddt&\left(\iab u_t^2dx+\iab u_x^2dx+\iab\theta^2 dx\right)\leq C\left(\|\theta\|^2_{\lab2}+\|u_t\|^2_{\lab2}+\|u_x\|^2_{\lab2}\right).
\end{align*}
The Gr\"{o}nwall inequality finishes the proof.
\end{proof}
In the next step of the proof of Theorem \ref{main_tw_gr}, we define the corresponding phase space and the dynamical system.
We denote
\begin{align*}
\M:=\left\{\eta\in \hab\colon \operatorname*{ess\, inf}_{x\in(a,b)}\eta(x)>0\right\}.
\end{align*}
Let us take $t\geq 0$. We define an operator
\begin{align*}
S(t):\left(\habo\cap\habl\right)\times\habo\times\M\to \left(\habo\cap\habl\right)\times\habo\times\M
\end{align*}
by formula
\begin{align*}
S(t)(u_0,v_0,\theta_0)=(u(t),u_t(t),\theta(t)),
\end{align*}
where $u$ and $\theta$ are weak solutions of \eqref{eq} with initial values $u_0, v_0$ and $\theta_0$. Thanks to the uniqueness of weak solutions of 
\eqref{eq}, we obtain
\begin{align*}
S(t+h)=S(h)\circ S(t)
\end{align*}
for all $t,h\in [0,\infty)$.

Proposition \ref{ciagzallip} implies the following important corollary.
\begin{cor}\label{ciagzalcor}
Let  $(u_0,v_0,\theta_0),(u_n,v_n,\theta_n)\in \left(\habo\cap\habl\right)\times\habo\times\M$ be such that \begin{align*}
(u_n,v_n,\theta_n)\to(u_0,v_0,\theta_0)\textrm{ in }\habo\times\lab2\times\lab2.
\end{align*}
Then also
\begin{align*}
S(t)(u_n,v_n,\theta_n)\to S(t)(u_0,v_0,\theta_0)\textrm{ in }\habo\times\lab2\times\lab2
\end{align*}
for each $t\geq 0$.
\end{cor}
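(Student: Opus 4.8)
The plan is to obtain Corollary \ref{ciagzalcor} as a direct quantitative consequence of Proposition \ref{ciagzallip}. Fix $t\geq 0$. For each $n$ I would apply Proposition \ref{ciagzallip} to the two solutions emanating from the data $(u_0,v_0,\theta_0)$ and $(u_n,v_n,\theta_n)$, respectively. Since $S(t)(\cdot)=(u(t),u_t(t),\theta(t))$, the left-hand side of the estimate in Proposition \ref{ciagzallip} is exactly the norm of $S(t)(u_n,v_n,\theta_n)-S(t)(u_0,v_0,\theta_0)$ in $\habo\times\lab2\times\lab2$, while its right-hand side is $C$ times the norm of $(u_n,v_n,\theta_n)-(u_0,v_0,\theta_0)$ in the same space. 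The latter tends to $0$ by hypothesis, so the conclusion follows once $C=C(t,M,|\mu|)$ is shown to be independent of $n$.

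Everything therefore reduces to producing a single bound $M$ valid for all $n$. Here I would use the freedom in assigning the roles in Proposition \ref{ciagzallip}: take the fixed limit solution (from $(u_0,v_0,\theta_0)$) to be the first of the two solutions, so that $\|u_{1,tx}\|_{L^{\infty}(0,\infty;\lab2)}$ is a single finite constant furnished by \eqref{beest} once and for all. The only remaining constituent of $M$ is then $\|\theta_2\|_{L^{\infty}(0,\infty;\hab)}$, that is, the high-order temperature norm of the $n$-th solution.

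The main obstacle is precisely the uniform-in-$n$ control of this last quantity. The hypothesis supplies convergence only in the weaker space $\habo\times\lab2\times\lab2$, and such convergence does not by itself force boundedness of the stronger data norms $\|u_n\|_{\habl}$, $\|v_n\|_{\habo}$, $\|\theta_n\|_{\hab}$, nor a uniform positive lower bound $\tilde\theta_n:=\operatorname*{ess\,inf}_{x\in(a,b)}\theta_n$, which are exactly the quantities entering the time-independent estimate of Theorem \ref{zwartosc}. I would resolve this by invoking \eqref{beest} together with the fact that the constant in Theorem \ref{zwartosc} depends only on $b-a$, $\mu$, and these data quantities: as soon as the sequence $(u_n,v_n,\theta_n)$ is bounded in $\left(\habo\cap\habl\right)\times\habo\times\M$ with $\tilde\theta_n$ bounded away from $0$, the temperature of the $n$-th solution is bounded in $L^{\infty}(0,\infty;\hab)$ uniformly in $n$ by some $M<\infty$, hence $C=C(t,M,|\mu|)$ is independent of $n$. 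This boundedness holds automatically in the intended application, where the $(u_n,v_n,\theta_n)$ are the values $S(t_n)(u_0,v_0,\theta_0)$ of a single trajectory read along a time sequence, and \eqref{beest} delivers time-independent, hence $n$-independent, strong bounds. With $M$ fixed in this way, passing to the limit $n\to\infty$ in the inequality of Proposition \ref{ciagzallip} yields the claimed convergence for every $t\geq 0$.
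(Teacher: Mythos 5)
Your proposal is correct and coincides with the paper's approach: the corollary is presented there as an immediate consequence of Proposition \ref{ciagzallip}, obtained exactly as you do by applying the stability estimate to the two solutions and letting $n\to\infty$. Your additional care about the constant $M$ being independent of $n$ --- which the stated hypothesis (convergence only in $\habo\times\lab2\times\lab2$) does not by itself guarantee, but which does hold in the paper's sole application where the $(u_n,v_n,\theta_n)$ are samples of a single trajectory uniformly controlled by \eqref{beest} and Theorem \ref{Twoddolu} --- is a point the paper passes over silently and is worth making explicit.
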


Now, we are ready to prove the paper's main theorem.
\begin{proof}[The proof of Theorem \ref{main_tw_gr}]
Let us take a sequence $t_n\in [0,\infty)$ such that $t_n\to\infty$. We consider the sequences $u(t_n,\cdot)$, $u_t(t_n,\cdot)$ and 
$\theta(t_n,\cdot)$. By Definition \ref{weakdef} and Remark \ref{rem1}, we know that
\begin{align}\label{cont}
\begin{split}
u\in C([0,\infty);&\habo),\ u_t \in C([0,\infty);L^2(a,b)),\\
\theta& \in C([0,\infty);\hab).
\end{split}
\end{align}
Due to \eqref{beest}, we see that an even stronger claim is satisfied by the sequence $\theta(t_n,\cdot)$, it is bounded in $\hab$. However, 
$u(t_n,\cdot)$ and $u_t(t_n,\cdot)$ are only essentially bounded in $\habl$ and $\habo$ respectively. It makes the compactness argument presented below 
a bit more delicate. Indeed, we only know that
\begin{align*}
u&\in L^{\infty}(0,\infty;H^2(a,b)),\quad u_t\in L^{\infty}(0,\infty,\habo).
\end{align*}
Thus, there exists a measurable set $\T\subset [0,\infty)$ such that $\left|[0,\infty)\setminus\T\right|=0$ and $u|_{\T}$, $u_t|_{\T}$ are bounded 
functions with values in $\habl$ and $\habo$ respectively.
However, if we show
\begin{align*}
\lim_{\substack{t\to\infty\\t\in\T}}u(t,\cdot)=0\textrm{ in }\habo
\end{align*}
and
\begin{align*}
\lim_{\substack{t\to\infty\\t\in\T}}u_t(t,\cdot)=0\textrm{ in }\lab2,
\end{align*}
then \eqref{cont} will imply that
\begin{align*}
\lim_{t\to\infty}u(t,\cdot)=0\textrm{ in }\habo,
\end{align*}
and
\begin{align*}
\lim_{t\to\infty}u_t(t,\cdot)=0\textrm{ in }\lab2.
\end{align*}
The details are left to the reader.

Therefore, we assume that $u(t_n,\cdot)$ and $u_t(t_n,\cdot)$ are bounded in $\habl$ and $\habo$ respectively. We also know that $\theta(t_n,\cdot)$ is 
bounded in $\hab$. Thus, we have a subsequence of $t_n$ (still denoted as $t_n$) such that
\begin{align}\label{cztery}
u(t_n,\cdot)&\rightharpoonup \tilde{u}\textrm{ in }\habl,\nonumber\\
u_t(t_n,\cdot)&\rightharpoonup\tilde{v}\textrm{ in }\habo,\\
\theta(t_n,\cdot)&\rightharpoonup\tilde{\theta}\textrm{ in }\hab,\nonumber
\end{align}
where $\tilde{u}\in\habo\cap\habl$, $\tilde v\in\habo$ and $\tilde\theta\in\hab$ are certain functions. By compact embeddings, we know that
\begin{align*}
\begin{split}
u(t_n,\cdot)&\to \tilde{u}\textrm{ in }\habo,\\
u_t(t_n,\cdot)&\to\tilde{v}\textrm{ in }\lab2,\\
\theta(t_n,\cdot)&\to\tilde{\theta}\textrm{ in }\lab2.
\end{split}
\end{align*}
Next, we define solutions starting from functions, which are obtained as limits of $u(t_n,\cdot),\theta(t_n,\cdot)$. We denote by $\bar u$ and 
$\bar\theta$ the weak solution of \eqref{eq} with the initial values $\tilde{u}$, $\tilde v$ and $\tilde\theta$ (they can be defined due to the fact 
that the limiting objects have enough regularity to define weak solutions, see \eqref{cztery}).

Next, we take $h>0$ and consider the sequences $u(t_n+h,\cdot)$, $u_t(t_n+h,\cdot)$ and $\theta(t_n+h,\cdot)$. Similarly, as above, there exists 
$\hat{u}\in\habo\cap\habl$, $\hat v\in\habo$ and $\hat\theta\in\hab$ such that
\begin{align*}
u(t_n+h,\cdot)&\to \hat{u}\textrm{ in }\habo,\\
u_t(t_n+h,\cdot)&\to\hat{v}\textrm{ in }\lab2,\\
\theta(t_n+h,\cdot)&\to\hat{\theta}\textrm{ in }\lab2.
\end{align*}
If necessary, we take the subsequence of $t_n$ in the above convergences. By Corollary \ref{ciagzalcor}, we obtain
\begin{align*}
(\hat u,\hat 
v,\hat\theta)&=\lim_{n\to\infty}(u(t_n+h,\cdot),u_t(t_n+h,\cdot),\theta(t_n+h,\cdot))=\lim_{n\to\infty}S(h)(u(t_n,\cdot),u_t(t_n,\cdot),\theta(t_n,\cdot))\\
&=S(h)(\tilde u,\tilde v,\tilde\theta)=(\bar u(h,\cdot),\bar u_t(h,\cdot),\bar\theta(h,\cdot)).
\end{align*}

On the other hand, by Proposition \ref{ogrzpop}, we know that a function
\begin{align*}
t\mapsto\iab\log\theta(t,x)dx
\end{align*}
is non-decreasing. Thus, sequences $\iab\log\theta(t_n,x)dx$ and $\iab\log\theta(t_n+h,x)dx$ must be convergent to the same number. Moreover,
\begin{align}\label{piec}
\iab\log\tilde\theta dx=\lim_{n\to\infty}\iab\log\theta(t_n,x)dx=\lim_{n\to\infty}\iab\log\theta(t_n+h,x)dx=\iab\log\bar\theta(h,x)dx.
\end{align}
Indeed, $\theta(t_n,x)$ is convergent in $L^2$, so also a.e. (up to a subsequence). Next, we use Theorem \ref{Twoddolu} to infer that both 
$\bar{\theta}(h,\cdot)$ and $\tilde{\theta}$ are bounded away from $0$. The latter allows us to conclude \eqref{piec} via the Lebesgue theorem.

Next, we remark that \eqref{piec} states that
\begin{align*}
t\mapsto\iab\log\bar\theta(t,x)dx
\end{align*}
is constant. The second principle of thermodynamics has just told us that the integral of entropy is constant at the $\omega$-limit set. Further, we 
make use of the knowledge of the dissipation rate of the entropy in Proposition \ref{ogrzpop} to obtain
\begin{align*}
0=\ddt\iab\log\bar\theta dx=\iab\frac{\bar\theta_x^2}{\bar\theta^2}dx.
\end{align*}
Hence,
\[
\left(\log{\bar{\theta}}\right)_x=0\;\;\mbox{in}\;\;(a,b)
\]
for any $t>0$. So, for each fixed $t>0$, $\bar{\theta}$ is constant in space.
But then, applying Proposition \ref{ogrzpop} (II second principle of thermodynamics) again, we obtain
\begin{align*}
0=\ddt\iab\log\bar\theta dx=\ddt\log\bar\theta (b-a)=\frac{\bar\theta_t}{\bar\theta}(b-a).
\end{align*}
It means that $\bar\theta$ is also constant in time. So, $\bar{\theta}(t,x)$ is constant in space and time.

So far, we identified all the trajectories of \eqref{eq} starting from functions $\tilde{\theta}$. The functions $\tilde{\theta}$ are all the possible limits with time of a 
temperature satisfying \eqref{eq}. We observe now that, actually, the $\omega$-limit set of a solution to \eqref{eq} contains only one potential limit 
for temperature $\tilde{\theta}$. To this end, we utilize the second principle of thermodynamics once more. Assume there are two different functions 
$\tilde{\theta}_1$ and $\tilde{\theta}_2$, which are limits of $\theta(t_{n_k})$ and $\theta(t_{n_l})$ respectively, for different time sequences 
$t_{n_k}, t_{n_l}\rightarrow \infty$. But Proposition \ref{ogrzpop} says that
\[
\iab \log \left(\tilde{\theta}_1\right)dx =\iab \log \left(\tilde{\theta}_2\right)dx.
\]
Moreover, we already know that solutions starting from $\tilde{\theta}_1$ and $\tilde{\theta}_2$ are constant in space and time. Hence
\[
\log (\tilde{\theta}_1)=\log (\tilde{\theta}_2)
\]
and so $\tilde{\theta}_1=\tilde{\theta}_2$. Thus, we have just shown that the $\omega$-limit set of a solution to \eqref{eq} contains a single function 
$\tilde{\theta}$, which is constant. Next, we identify all potential limits of displacements.

First, we name $\tilde{u}$ the limit of $u(t_n,x)$ for a sequence $t_n\rightarrow \infty$. Next, we consider $\bar u$ and $\bar\theta$ satisfying 
\eqref{eq} with initial data $\tilde{\theta}, \tilde{u}$ respectively. Hence, since $\bar{\theta}$ is constant and positive (again, we use Theorem \ref{Twoddolu}), we have
\begin{align}\label{szesc}
\begin{cases}
\bar u_{tt}-\bar u_{xx}=0,\\
0=\mu\bar \theta\bar u_{tx},\\
\bar{u}(a)=\bar{u}(b)=0.
\end{cases}
\end{align}

The second equation and boundary conditions in \eqref{szesc} show that $\bar u_t(t,x)=0$ for all $(t,x)\in[0,\infty)\times[a,b]$. Hence, $\bar{u}_t=0$ in $[a,b]$ and so $\bar{u}$ is constant 
in time for each $x\in[a,b]$, which together with the first equation in \eqref{szesc} states that $\bar{u}_{xx}=0$ and this implies that $\bar{u}=0$.  
Finally, the conservation of the energy (Proposition \ref{energy_cons}) states that
\begin{align*}
\frac12&\iab v_0^2dx+\frac12\iab u_{0,x}^2dx+\iab\theta_0 dx\\
&=\lim_{n\to\infty}\left(\frac12\iab u_t^2(t_n,x)dx+\frac12\iab u_x^2(t_n,x)dx+\iab\theta(t_n,x) dx\right)\\
&=\iab\tilde\theta dx=(b-a)\tilde\theta.
\end{align*}
\end{proof}

\noindent{\bf Data availability} Data sharing not applicable to this article as no datasets were generated or analysed during
the current study.

\noindent{\bf Conflict of interest} The author declares that he has no conflict of interest.

\end{document}